\documentclass[leqno,11pt]{amsart}
\usepackage{amsmath,amssymb,amsfonts,amscd}
\usepackage[mathscr]{eucal}
\usepackage{tikz}
\usepackage{verbatim}
\usepackage[colorlinks,pagebackref]{hyperref}
\usepackage{rotating}

\raggedbottom

\usepackage{color}
\theoremstyle{plain}
\newtheorem{thm}{Theorem}[section]

\newtheorem{cor}[thm]{Corollary}
\newtheorem{prop}[thm]{Proposition}

\newtheorem*{thm*}{Theorem}

\newtheorem{sublem}[equation]{Lemma}

\theoremstyle{definition}

\newtheorem{cosa}[thm]{}

\newtheorem{ex}[thm]{Example}
\newtheorem{exs}[thm]{Examples}
\newtheorem{subex}[equation]{Example}

\newtheorem{subdef}[thm]{Definition}

\theoremstyle{remark}
\newtheorem*{rem}{Remark}

\newtheorem{subrems*}{Remarks.}


\numberwithin{equation}{thm}

\newcommand{\D}{\boldsymbol{\mathsf{D}}}

\newcommand{\LL}{\mathsf L}
\newcommand{\R}{\mathsf R}
\newcommand{\N}{\operatorname{N}}

\newcommand{\tr}{{\triangleright}}

\newcommand{\sA}{\mathscr{A}}

\newcommand{\sC}{\mathscr{C}}
\newcommand{\sD}{\mathscr{D}}
\newcommand{\sE}{\mathscr{E}}

\newcommand{\sH}{\mathscr{H}}

\newcommand{\sM}{\mathscr{M}}
\newcommand{\cO}{\mathscr{O}}

\newcommand{\sS}{\mathscr{Sh2}}

\newcommand{\sU}{\mathscr{U}}
\newcommand{\sW}{\mathscr{W}}

\newcommand{\op}{{\mathsf o\mathsf p}}

\newcommand{\ZZ}{\mathbb Z}

\newcommand{\ic}{$\infty$-category}
\newcommand{\ics}{$\infty$-categories}

\newcommand{\CA}{\mathcal A}

\newcommand{\CH}{\mathcal H}



\newcommand{\bpic}{\begin{tikzpicture}}
\newcommand{\epic}{\end{tikzpicture}}

\newcommand{\Otimes}[1]{\otimes^\LL_{#1}}

\newcommand{\sHom}{\CH om}

\newcommand{\eqv}{\xto{\lift.45,\,\approx\,,}}



\newcommand{\set}{\!:=}


\newcommand{\sst}{\scriptstyle}

\newcommand{\smallcirc}{{\>\>\lift1,\sst{\circ},\,}}

\newcommand{\<}{\mkern-1mu}
\renewcommand{\>}{\mkern1mu}
\newcommand{\va}[1]{\vspace{#1pt}}

\newcommand{\halfsize}[1]{{\scalebox{.6}{#1}}}

\newcommand{\kf}{\kern.5pt}

\def\lift#1,#2,{\vbox to 0pt{\vskip-#1 ex\hbox{$\scriptstyle #2$}\vss}}


\newcommand{\OW}{\mathcal O_W}










\newcommand{\fundamentalclassa}[1]{{\boldsymbol{\mathsf{a}}}_{#1}}

\newcommand{\fundamentalclassb}[1]{{\boldsymbol{\mathsf{b}}}_{#1}}


\newcommand{\Map}[1]{\operatorname{Map}_{#1}}
\newcommand{\Fun}[2]{\operatorname{Fun}(#1,#2)}


\newcommand{\lmod}[1]{{\operatorname{LMod}}_{#1}}

\newcommand{\Bmod}[2]{#1{\textup{\tt \#}}#2}



\newcommand{\lto}{\longrightarrow}
\newcommand{\xto}{\xrightarrow}

\newcommand{\xot}{\xleftarrow}

\newcommand\iso{{\mkern8mu\longrightarrow \mkern-25.5mu{}^\sim\mkern17mu}}
\newcommand\osi{{\mkern8mu\longleftarrow \mkern-25.5mu{}^\sim\mkern17mu}}

\DeclareMathOperator{\h}{\textup{h}\!}

\DeclareMathOperator{\spec}{Spec}
\DeclareMathOperator{\Hom}{Hom}
\DeclareMathOperator{\tor}{Tor}
\DeclareMathOperator{\ext}{Ext}

\DeclareMathOperator{\id}{id}

\DeclareMathOperator{\Alg}{Alg}

\newcommand{\env}[1]{{#1}^{\mathsf e}}
\newcommand{\ov}[1]{\overline{#1}}

\def\cA #1; #2;{\cite[p.\,#1, #2]{A}}
\def\cT #1; #2;{\cite[p.\,#1, #2]{T}}

\def\lift#1,#2,{\vbox to 0pt{\vskip-#1 ex\hbox{$\scriptstyle #2$}\vss}}

\def\drlm#1{\underset{\vtop{\vskip-4.2pt\hbox to 14pt{\rightarrowfill} \vskip-10pt\hbox{$\scriptstyle \ #1$}}}\to\lim\,}

\def\dirlm#1{\lim\hskip-1.65em\lower1.37ex
       \hbox{\smash[b]{$
                   \underset{\lift 1.37,
                                         {\hbox to 0pt{\hss$\scriptscriptstyle#1$\hss}},
                                  }
                     {\:\hbox to 1.37em {\rightarrowfill}}
               $} }                      
     \!\<}

\begin{document}

\title[Adjoint associativity: an invitation to algebra in $\infty$-categories.]{Adjoint associativity: an invitation to algebra in $\infty$-categories.}

\author[J.\,Lipman]{Joseph Lipman} 
\address{Department of Mathematics,  Purdue University, West Lafayette IN 47907, U.S.A.}
\email{jlipman@purdue.edu}

\thanks{This expository article is an elaboration of a colloquium talk given April 17,~2013 at the Math.\ Sci.\ Research Institute in Berkeley,  under the Commutative Algebra program supported by the National Science Foundation.}
  
\keywords{Infinity-category algebra, adjoint associativity, Grothendieck duality, 
Hochschild derived functors,  relative perfection, relative dualizing complex.}

\subjclass[2010]{Primary: 14F05, 13D09. Secondary:  13D03}

\date{\today}

\begin{abstract}  There appeared not long ago a Reduction Formula for derived Hochschild cohomology, 
that has been useful e.g., in the study of Gorenstein maps and of rigidity w.r.t.\ semidualizing complexes.
The formula involves the relative dualizing complex of a  ring homomor\-phism, so brings out a connection between
Hochschild homology and Grothendieck~\mbox{duality}. The proof, somewhat ad hoc, uses homotopical considerations via
a number of noncanonical  projective and injective resolutions of differential graded objects.
Recent efforts aim at more intrinsic approaches, hopefully upgradable to
``higher" contexts---like bimodules over algebras in $\infty$-categories. This would lead to wider applicability,
for example to ring spectra; and the methods might be globalizable, revealing some homotopical generalizations
of aspects of Grothendieck duality. (The~original formula has a geometric Êversion, proved by completely different methods coming from duality theory.) A first step is to
extend Hom\kf-Tensor adjunction---adjoint associativity---to the $\infty$-category setting.
\va{-8}
 \end{abstract}

\maketitle

\tableofcontents

\section*{Introduction} 
There are substantial overlaps between algebra and homotopy theory, making for mutual enrichment---better understanding of some topics, and wider applicability of results from both areas.  In this vein, works of Quillen, Neeman, Avramov-Halperin, Schwede\kf-Shipley,
Dwyer-Iyengar-Greenlees, to mention just a few, come to mind. See also \cite{Gr}. In recent years, homotopy theorists like May, To\"en, Joyal, Lurie (again to mention just a few) have been developing a huge theory of \emph{algebra in $\infty$-categories,} dubbed by Lurie ``Higher Algebra",%
\footnote{Not to be confused with the contents of  \cite{HK}.}  
familiarity\- with which could be of significant benefit to (lower?)~algebraists.

This little sales pitch will be illustrated here by one specific topic
that arose algebraically, but can likely be illuminated by homotopical ideas.

\section{Motivation: Reduction of Hochschild (co)homology}
Let $R$ be a noetherian commutative ring, $\D(R)$ the derived category of the category of $R$-modules, and similarly for $S$. Let $\sigma\colon R\to S$ be an essentially-finite-type \emph{flat} homomorphism. Set $\env S\set S\otimes_R S$. Let $M,N\in\D(S)$, with $M$ \emph{$\sigma$-perfect}, i.e., the cohomology modules $H^i(M)$ are finitely generated over~$S$, and the natural image of $M$ in $\D(R)$ is isomorphic to a bounded complex of flat $R$-modules.

\begin{thm}[Reduction theorem, {\cite[Thms.\,1 and 4.6]{AILN}}]\label{RednThm} There exists a complex\/ $D^\sigma\in\D(S)$ together with bifunctorial\/ $S$-isomorphisms
\begin{align}
\R\<\<\Hom_{\env S}(S, M\Otimes{R}N)&\iso 
    \R\Hom_S\!\big(\R\<\<\Hom_S(M, D^\sigma),\>N\big),\label{RednCo}\\
S\Otimes{\env S}\<\R\<\<\Hom_R(M,N)&\iso 
    \R\<\<\Hom_S(M,D^\sigma)\Otimes{S} N.\label{RednHo}
\end{align}
\end{thm}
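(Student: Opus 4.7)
\emph{Proof plan.} The strategy is to construct $D^\sigma\in\D(S)$ as the relative dualizing complex of $\sigma$ and then to derive both isomorphisms from repeated applications of derived Hom-Tensor adjunction, with the $\sigma$-perfectness of $M$ being the hypothesis that makes the formal manipulations collapse into the desired shape.

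The construction and structural identifications needed are as follows. For $D^\sigma$, take $D^\sigma\set\sigma^!\>S$, the image of $S$ under Grothendieck's twisted inverse image functor; under the flatness and essentially-finite-type hypotheses on $\sigma$, this is a well-defined object of $\D(S)$, and moreover admits a concrete description in terms of $\env S$-modules that makes the ensuing calculations available. The pair of canonical bimodule rewrites, both in $\D(\env S)$,
$$M\Otimes{R}N \;\iso\; M\Otimes{S}\env S\Otimes{S}N,\qquad
\R\Hom_R(M,N) \;\iso\; \R\Hom_S\!\big(\env S\Otimes{S}M,\,N\big),$$
with $\env S$ regarded as an $(S,S)$-bimodule via its two $S$-factors, are derived versions of classical identities; they become available at the derived level because the flatness of $\sigma$ gives $\env S = S\Otimes{R}S$.

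For the first isomorphism, substitute the first rewrite into $\R\Hom_{\env S}(S,\,M\Otimes{R}N)$ and apply derived adjoint associativity to pull the $M$-factor outside as $\R\Hom_S(M,-)$. The remaining inner term is then identified, via a projection-type formula encoding the construction of $D^\sigma$, with a combination of $D^\sigma$ and $N$, and a second adjoint-associativity move---legitimated by the $\sigma$-perfectness of $M$---yields the asserted right-hand side $\R\Hom_S\!\big(\R\Hom_S(M,D^\sigma),\,N\big)$. The second isomorphism is proved in parallel, starting from the second rewrite in the previous paragraph and tensoring with $S$ over $\env S$ rather than taking $\R\Hom_{\env S}(S,-)$.

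The main obstacle is the projection-type identity needed in the middle of each argument: one must show that $\R\Hom_{\env S}(S,-)$, respectively $S\Otimes{\env S}(-)$, interacts correctly with tensor products over $S$ in a form that recovers $D^\sigma$. Because $S$ is in general not a perfect $\env S$-module---that would force $\sigma$ to be a local complete intersection---this step is not formal; in \cite{AILN} it is carried out by hand via compatible projective and injective resolutions of differential graded $\env S$-modules. Replacing this ad hoc step with an intrinsic, resolution-free argument---by working directly in an $\infty$-categorical setting where bimodules over algebras are natively available---is precisely the motivation for the framework developed in the remainder of this paper.
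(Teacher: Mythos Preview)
Your plan diverges from the paper's argument and has a real gap in the central step. The paper's route (sketched in Remark~5 and the paragraph following it) is: first treat the formally smooth case, where $S$ is perfect over $\env S$, by using local Koszul resolutions to establish the single isomorphism
\[
\phi\colon \R\Hom_{\env S}(S,\env S)\iso \R\Hom_S(D^\sigma,S),
\]
and then obtain \eqref{RednCo} for general $M,N$ by applying $-\Otimes{\env S}(M\Otimes{R}N)$ to $\phi$; the non-smooth case is then reduced to the smooth one by factoring $\sigma$ as (surjection)$\smallcirc$(formally smooth). Your outline bypasses both the Koszul computation and the factorization, attempting instead a direct attack via adjoint associativity.

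The gap is in your ``pull the $M$-factor outside as $\R\Hom_S(M,-)$'' move. Adjoint associativity has the shape $\R\Hom_C(x\otimes_B y,z)\cong\R\Hom_B(x,\R\Hom_C(y,z))$: it shifts a tensor factor from the \emph{first} argument of an $\R\Hom$ to the outside. In $\R\Hom_{\env S}(S,\,M\Otimes{S}\env S\Otimes{S}N)$ the $M$ sits in the \emph{second} argument, so no single application of adjoint associativity extracts it; one needs instead a commutation of $\R\Hom_{\env S}(S,-)$ past $M\Otimes{S}-$, which is exactly the sort of ``projection-type identity'' you defer. But that identity is the whole content of the theorem in disguise---in the smooth case it follows because $S$ is $\env S$-perfect, and in general it is what the factorization argument in \cite{AILN} establishes. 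So your plan does not actually reduce the problem. A minor further point: taking $D^\sigma\set\sigma^!S$ as a \emph{definition} imports Grothendieck-duality machinery; the paper (Remark~4) treats $D^\sigma\cong g^!\cO_W$ as an identification \emph{after} the theorem, with $D^\sigma$ characterized intrinsically by the isomorphisms themselves.
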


\noindent\emph{Remarks} 1. ``Reduction" refers to the reduction, via \eqref{RednCo} and~\eqref{RednHo}, of constructions over $\env S$ to constructions over $S$.\va1

2. The homology $S$-modules of the sources of \eqref{RednCo} and~\eqref{RednHo} are the Hochschild
cohomology modules of $\sigma$, with coefficients in \mbox{$M\Otimes{R}N$}, and the~Hochs\-child homology modules of $\sigma$,  with
coefficients in $\R\<\<\Hom_R(M,N)$,\ respectively.\va1

3. (Applications.) The isomorphism \eqref{RednCo} is used to formulate a notion of rigidity with respect to a fixed semidualizing complex \cite[\S3]{AIL}, leading to a broad generalization of the work of Yekutieli and Zhang summarized in~\cite{Y}.\looseness=-1
 
 The special case $M=N=S$ of \eqref{RednCo} plays a crucial role in the proofs of
 \cite[Theorems 3 and 4]{AI}.
 
 The special case $M=N=S$ of \eqref{RednHo} is used in a particularly simple expression for the fundamental class of $\sigma$, see \cite[Thm.\,4.2.4]{ILN}.\va1

4. The complex $D^\sigma$ is determined up to isomorphism by either \eqref{RednCo}
---which implies that $D^\sigma$ corepresents the endofunctor 
$\R\<\<\Hom_{S^e}(S, S\otimes_R -)$ of $\D(S)$---or, more directly, 
by \eqref{RednHo}---which yields an isomorphism
\[
S\Otimes{\env S}\<\R\<\<\Hom_R(S,S)\iso D^\sigma\<.
\]
 
In fact,  if $g$ is the map $\spec(\sigma)$ from $V\set\spec(S)$ to $W\set\spec(R)$,
then 
\[D^\sigma\cong g^!\OW,
\] 
i.e., $D^\sigma$ is a \emph{relative dualizing complex for $\sigma$}  \cite[Remark 6.2]{AILN}. 
    
Thus we have a relation (one of several) between Hochschild homology and Grothendieck duality.\va1
  
5. For example, if $\spec(S)$ is connected and $\sigma$ is formally smooth, so that,
with $I$ the kernel of the multiplication map $\env S\to S$,
the relative differential module $\Omega_\sigma\set I/I^2$ is locally free of constant rank, 
say~$d$, then 
\[
D^\sigma\cong \Omega^d_\sigma[d\>]\set\big(\!\wedge^{\<d}\<I/I^2\>\big)[d\>]
\cong\tor^{\env S}_d\<\<(S,S)[d\>].
\]
Using local resolutions of $S$ by Koszul complexes of  $\env S$-regular sequences that generate $I\<$, one finds a chain of natural $\D(S)$-isomorphisms 
\begin{align*}
\R\<\<\Hom_{\env S}(S,\env S)&\iso 
\big(H^d\R\<\<\Hom_{\env S}(S,\env S)\big)[-d\>]\\
&\iso
\big(H^d\R\<\<\Hom_{\env S}(S,\env S)\big)[-d\>]\otimes_{\env S} S\\
&\iso \big(H^d\big(\R\<\<\Hom_{\env S}(S,\env S)\Otimes{\env S} S\big)\big)[-d\>]\\
&\iso \big(H^d\big(\R\<\<\Hom_S(S\Otimes{\env S} S, S)\big)\big)[-d\>]\\
&\iso \<\<\Hom_S\!\big(\<\tor^{\env S}_d\<\<(S,S)[d\>],S\big)\\
&\iso\<\<\Hom_S(D^\sigma\<,S)\\
&\iso\R\<\<\Hom_S(D^\sigma\<,S).
\end{align*}
The composition $\phi$ of this chain is \eqref{RednCo} with $M=N=S$. (It is essentially the same as the isomorphism
$H^d\R\<\<\Hom_{\env S}(S,\env S)\!\iso\!\Hom_S(\wedge^{\<d}I\</I^{\>2}\<,S)$ given by  the ``fundamental local 
isomorphism" of \cite[Chap.\,III, \S7\kf\kf]{RD}.)

As $\sigma$ is formally smooth, 
$\sigma$-perfection of $M$ is equivalent to $M$ being a perfect $\env S$-complex;
and $S$ too is a perfect $\env S$-complex.
It is then straightforward to obtain \eqref{RednCo} by applying to~$\phi$ the functor 
\[
-\Otimes{\env S}(M\Otimes{R} N)=-\Otimes{S}S\Otimes{\env S}(M\Otimes{R} N)
\cong -\Otimes{S}(M\Otimes{S} N).
\]  
(Note that $M$ and $N$ may be assumed to be K-flat
over $S$, hence over $R$.)\va1
\va2

To prove \eqref{RednCo} for arbitrary $\sigma$ one uses a factorization
\[
\sigma=\textup{(surjection)}\smallcirc\textup{(formally smooth)}
\]  to reduce to the preceding formally~smooth case. For this reduction (which is the main difficulty in the proof), as well as a scheme\kf-theoretic version of Theorem~\ref{RednThm}, see \cite{AILN} and 
\cite[Theorem 4.1.8]{ILN}.

\section{Enter homotopy}
So far no homotopical ideas have appeared.  But they become necessary, via 
(graded-commutative) differential graded algebras (dgas), when the flatness assumption on $\sigma$ is dropped. Then for Theorem~\ref{RednThm} to
hold, one must first define $S^e$ to be a derived tensor product:
\[
\env S\set S\Otimes{R} S\set \ov S\otimes_R \ov S,
\]
where $\ov S\to S$ is a homomorphism of dg $R$-algebras that induces homology isomorphisms, with $\ov S$ \emph{flat} over~$R$. Such ``flat dg algebra resolutions" of the $R$-algebra $S$ exist; and any two are ``dominated" by a third. (This is well-known; for more details, see \cite[\S2, \S3]{AILN}.)
Thus $\env S$ is not an $R$-algebra, but rather a class of quasi-isomorphic dg $R$-algebras.

By using suitable ``semiprojective" dg $\ov S$-resolutions of the complexes $M$ and $N$, one can make sense of the statements 
\[
M\Otimes{R} N\in\D(\env S), \qquad\R\<\<\Hom_R(M,N)\in\D(\env S);
\]
and then, following Quillen, Mac Lane and Shukla, define  complexes
\[
\R\<\<\Hom_{\env S}(S, M\Otimes{R}N),\qquad S\Otimes{\env S} \R\<\<\Hom_R(M,N)
\]
whose homology modules are the
\emph{derived Hochschild cohomology} resp.~\emph{the derived Hochschild homology} 
modules of $\sigma$, with coefficients in $M\Otimes{R}N$ and~$\R\<\<\Hom_R(M,N)$,
respectively. These complexes depend on a number of choices of resolution, 
so they are defined only up to a coherent family of
isomorphisms, indexed by the choices. This is analogous to what happens when one
works with derived categories of modules over a ring.

In \cite{AILN}, the reduction of Theorem~\ref{RednThm} to the formally smooth case is done by the manipulation of a number of noncanonical dg resolutions, both semiprojective and semiinjective. Such an argument tends to obscure the conceptual structure. 
Furthermore, in section 6 of that paper a geometric version of Theorem~\ref{RednThm} is proved by completely different methods associated with Grothendieck
duality theory---but only for flat maps.  A globalized theory of derived Hochschild (co)homology for analytic spaces or noetherian schemes of characteristic zero is given, e.g., in \cite{BF}; but there is as yet no extension of Theorem~\ref{RednThm} to nonflat maps of such spaces or schemes.\looseness=-1

The theory of algebra in $\infty$-categories, and its globalization ``derived algebra geometry," encompass all of the above situations,%
\footnote{to some extent, at least: see e.g., \cite{Sh2}.   But see also~\ref{derived tensor} and~\ref{derived hom} below.} 
 and numerous others, for instance
``structured spectra" from homotopy theory.  The (unrealized) underlying goal toward which this lecture is a first step is to prove
a version of Theorem~\ref{RednThm}---without flatness hypotheses---that is meaningful in this general context.  The hope is that such a proof could unify the
local and global versions in \cite{AILN}, leading to better understanding and wider applicability; and perhaps most importantly, to new insights into, and generalizations of, Grothendieck duality.\looseness=-1

\section{Adjoint Associativity}\label{AA3}
To begin with, such an upgraded version of Theorem~\ref{RednThm} must involve some generalization of $\otimes$ and Hom;
and any proof will most probably involve the basic relation between these functors,
namely \emph{adjoint associativity}.

For any two rings (not necessarily commutative) $R$, $S$, let $R\#S$ be the abelian category 
of $R\>$-$S$ bimodules ($R$ acting on the left and $S$ on the right).

The classical version of adjoint associativity (cf.~\cite[VI, (8.7)]{M}) asserts that
\emph{for rings $A$, $B$, $C$, $D$, and
$x\in A\#B$, $y\in B\#C$, $z\in D\#C$, there exists in\/ $D\#A$ a functorial isomorphism}\begin{equation*}\label{classical}
a(x,y,z)\colon\Hom_C(x\otimes_B \<y, z)\iso \Hom_B(x,\Hom_C(y,z))
\tag{\ref{AA3}.1}
\end{equation*}
\emph{such that for any fixed\/ $x$ and\/ $y,$ the corresponding isomorphism between
the left adjoints of the target and source of\/ $a$ is the associativity isomorphism} 
\[
-\otimes_A (x\otimes_B \<y)\osi (-\otimes_A x)\otimes_B y.
\]

As hinted at above, to get an analogous statement for \emph{derived categories,} where one needs flat resolutions to define (derived) tensor~products,
one has to work in the dg world; and that suggests going all the way to 
\emph{\ics}.

The remainder of this talk will be an attempt to throw some light on
how~\eqref{classical} can be formulated and proved in the $\infty$-context. There will be no possibility of
getting into details, for which however liberal references will be given to the massive works \cite{T} and \cite{A} (downloadable from Lurie's home page
{\tt www.math.harvard.edu/\~{}lurie/\kf}), for those who might be prompted to explore the subject matter more thoroughly.%
\footnote{The page numbers in the references to \cite{A} refer to the preprint dated August, 2012.}

\section{\ics} It's time to say what an \ic\ is. 

An ordinary small category $\sC$ is, to begin with, a diagram
\[
 \bpic[xscale=3, yscale=2]

  \node(11) at (.977,-1){$A_1$};
  \node(13) at (2.023,-1){$A_0$};   
  \node(12) at (1.5,-1)[scale=.75]  {$s_{\halfsize0}$};
  \draw[->] (12)--(11);
  \draw[-]   (12)--(13);
  \draw[->] (1.1,-.87)--(1.9,-.877) node[above=1pt, midway, scale=.75]{$d_1$};
  \draw[->] (1.1,-1.13)--(1.9,-1.13) node[below=1pt, midway, scale=.75]{$d_{\halfsize0}$};

 \epic
\]
where $A_1$ is the set of arrows in $\sC$,  $A_0$ is the set of objects, 
$s_0$ takes an object to its identity map, and $d_0$ (resp.~$d_1$) takes an arrow to  its target (resp.~source). We can extend this picture by introducing sequences of composable arrows:
\[
 \bpic[xscale=2, yscale=1.8]
  \node(12) at (3.4,-1){$\bullet$};

  \node(21) at (1,-2){$\bullet$};
  \node(23) at (3,-2){$\bullet$};
  \node(25) at (4.2,-2){$\bullet$};
  \node(27) at (6.2,-2){$\bullet$};

  \node(16) at (6.6,-1){$\bullet$};

  \node(06) at (5.2,0){$\bullet$};

  \draw[->](21)--(23);
  \draw[->,dashed] (21)--(12);
  \draw[->] (23)--(12);

  \draw[->] (25)--(27);
  \draw[->] (27)--(16);
  \draw[->] (16)--(06);
  
  \draw[-,dashed](4.3,-1.93)--(5.75,-1.337);
  \draw[->,dashed](5.93,-1.263)--(6.5,-1.03);
  \draw[->,dashed](25)--(06);
  \draw[->,dashed](27)--(5.24,-.15);
 \epic
\]
The first picture represents a sequence of two composable arrows, whose composition is represented by the dashed arrow; and the second picture represents
a sequence of three composable arrows, with dotted arrows representing compositions of two or three of these.  The pictures suggest calling a sequence of $n$ composable arrows an \emph{$n$-simplex}. (A 0\kf-simplex is simply an object in $\sC$.) The set of 
$n$-simplices is denoted $A_n$.

There are four \emph{face maps} $d_i\colon A_3\to A_2\ (0\le i\le 3)$, taking a sequence
$\gamma\smallcirc\beta\smallcirc\alpha$ to the respective sequences 
$\gamma\smallcirc\beta$, $\gamma\smallcirc(\beta\alpha)$,
$(\gamma\beta)\smallcirc \alpha$ and $\beta\smallcirc\alpha$.
There are three \emph{degeneracy maps} $s_j\colon A_2\to A_3\ (0\le j\le2)$ taking
a sequence $\beta\smallcirc\alpha$ to the respective ``degenerate" (that is, containing an identity map) sequences $\beta\smallcirc\<\alpha\smallcirc\!\id$, $\beta\smallcirc\!\id\!\smallcirc\alpha$ and $\id\<\<\smallcirc\beta\smallcirc\alpha$.

Likewise, for any $n>0$ there are face maps $d_i\colon A_n\to A_{n-1}\ (0\le i\le n)$
and degeneracy maps $s_j\colon A_{n-1}\to A_n\ (0\le j<n)$; and these maps satisfy the standard identities that define a \emph{simplicial set} (see e.g., \cite[p.\,4, (1.3)]{GJ}).

The simplicial set $\N(\sC)$ just defined is called the \emph{nerve of} $\sC$.\va2

\noindent\emph{Example.}  For any $n\ge 0,$ the totally ordered set of integers 
\[
0<1<2<\cdots<n-1<n
\]
can be viewed as a category (as can any ordered set). The nerve of this category is the \emph{standard $n$-simplex,}
denoted $\Delta^{\<n}$. Its $m$-simplices identify with the nondecreasing maps from the integer interval $[0,m]$ to $[0,n]$. In particular, there is a unique nondegenerate 
$n$-simplex $\iota_n$, namely the identity map of $[1,n]$.

The collection of all the nondegenerate simplices of $\Delta^{\<n}\<$, and their face maps, can be visualized by means of the usual picture of a geometric  $n$-simplex and its  subsimplices.
(For $n=2$ or 3, see the above pictures, with all dashed arrows made solid.)\va1

The \emph{horn} $\Lambda_i^n\subset \Delta^{\<n}$ is the simplicial subset  whose $m$-simplices ($m\ge 0$) are the nondecreasing maps  $s\colon [0,m]\to [0,n]$ with  image not containing  the set $\big([0,n]\setminus\{i\}\big)$. For example, there are 
$n$ nondegenerate $(n-1)$-simplices  namely $d_j\iota_n\ (0\le j\le n,\; j\ne i)$.  \va1

Visually, the nondegenerate simplices of $\Lambda_i^n\subset \Delta^{\<n}$ are those subsimplices of a geometric $n$-simplex other than the $n$-simplex itself and its $i$-th face. \va2

Small categories are the objects of a category $\sC$at whose morphisms are functors; and
simplicial sets form a category $\sS\textup{et}_\Delta$ whose morphisms are
\emph{simplicial maps}, that is,  maps taking $m$-simplices to $m$-simplices (for all $m\ge0$) and commuting with all the face and degeneracy maps. The above map $\sC\mapsto \N(\sC)$ extends in an obvious way to a \emph{nerve functor}
$\sC\textup{at} \to\sS\textup{et}_\Delta\>$.

\begin{prop}\label{nerve functor} \textup{(\cite[p.\,9, 1.1.2.2]{T}.)} The nerve functor\/ $\textup{$\sC$at} \to \sS\textup{et}_\Delta$ is a fully faithful embedding. Its essential image is the full subcategory of\/ $\sS\textup{et}_\Delta$ spanned by the simplicial sets\/ $K$ with the following property$\>\>:$

$(*)$ For all $n>0$ and $0<i<n$, every simplicial map $\Lambda_i^n\to K$ extends
\emph{uniquely} to a simplicial map\/ $\Delta^{\<n}\to K\<$.

\end{prop}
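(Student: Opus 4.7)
The plan is to treat the three assertions---faithfulness, fullness, and the description of the essential image---in sequence. For faithfulness, I would observe that a functor $F\colon\sC\to\sD$ is determined by its actions on objects and morphisms, which is precisely the data of $\N(F)$ in simplicial degrees $0$ and $1$. For fullness, given a simplicial map $f\colon\N(\sC)\to\N(\sD)$, I would set $F_0\set f_0$ on objects and $F_1\set f_1$ on morphisms, and then verify functoriality: commutation with $d_0,d_1$ preserves source and target, commutation with $s_0$ sends identities to identities, and commutation with the three face maps on $2$-simplices encodes preservation of composition, since a $2$-simplex in $\N(\sC)$ is precisely a composable pair $\beta\smallcirc\alpha$ with faces $\beta$, $\beta\alpha$, and $\alpha$.

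To establish one direction of the characterization of the essential image, I would verify that $\N(\sC)$ satisfies $(*)$. The key observation is that for $0<i<n$, the \emph{spine} of $\Delta^{\<n}$---the union of the edges $\{k,k+1\}$ for $0\le k<n$---is contained in $\Lambda_i^n$: the edge $\{k,k+1\}$ lies in the face $d_j\iota_n$ for any $j\notin\{i,k,k+1\}$, and such a $j$ exists whenever $n\ge 2$. A simplicial map $\Lambda_i^n\to\N(\sC)$ therefore restricts on the spine to a chain of composable morphisms $\alpha_0,\dots,\alpha_{n-1}$, and this chain determines, and is determined by, a unique $n$-simplex of $\N(\sC)$.

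For the converse, given $K$ satisfying $(*)$, I would construct a category $\sC_K$ with objects $K_0$, morphisms $K_1$, source/target maps $d_1,d_0$, and identities $s_0$. For $f,g\in K_1$ with $d_0 f=d_1 g$, the horn $\Lambda^2_1\to K$ sending the two edges to $f$ and $g$ fills uniquely to a $2$-simplex $\sigma$, and I set $g\smallcirc f\set d_1\sigma$. The unit laws follow from the degenerate $2$-simplices $s_0 f$ and $s_1 f$. For associativity, given composable $\alpha,\beta,\gamma$, I would assemble a horn $\Lambda^3_1\to K$ whose $d_0,d_2,d_3$ faces are the $2$-simplices witnessing $\gamma\beta$, $(\gamma\beta)\smallcirc\alpha$, and $\beta\alpha$; the common boundary data is consistent, so $(*)$ produces a unique filler, and its $d_1$ face is then a single $2$-simplex witnessing both $\gamma\smallcirc(\beta\alpha)$ and $(\gamma\beta)\smallcirc\alpha$, forcing their equality.

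The hard part---and the real content of the proposition---will be to show that the canonical comparison map $K\to\N(\sC_K)$ is a simplicial isomorphism. It is a bijection in degrees $0$ and $1$ by construction, and in degree $2$ by uniqueness of $\Lambda^2_1$-fillers. For $n\ge 3$ I would induct on $n$: given a chain of $n$ composable arrows, the already-constructed $(n{-}1)$-simplices of $K$ witnessing all consecutive partial compositions assemble, after omitting one inner face, into an inner horn $\Lambda_i^n\to K$ whose unique filler supplied by $(*)$ is the desired lift. The delicate step, where I would expect the most effort, is the bookkeeping: checking via the simplicial identities and the inductive uniqueness clause that the assembled faces really do agree on overlaps to form a well-defined horn with the correct boundary. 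No new idea beyond iterated application of the uniqueness half of $(*)$ should be required.
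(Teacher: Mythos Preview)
The paper does not supply its own proof of this proposition: it is stated with a citation to \cite[p.\,9, 1.1.2.2]{T} and nothing more. Your sketch is correct and is essentially the argument Lurie gives at the cited location---faithfulness and fullness from degrees $0$ and $1$, the spine argument for one inclusion of the essential image, and the inductive horn-filling argument for the other. So there is nothing in the paper to compare against beyond the bare reference; your write-up simply unpacks what the paper leaves to Lurie.

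One small point of phrasing in your associativity step: the $d_1$-face of the filled $3$-simplex does not literally ``witness both'' composites. Rather, it is the unique $\Lambda^2_1$-filler for the pair $(\beta\alpha,\gamma)$, so its long edge is $\gamma\smallcirc(\beta\alpha)$ by definition; but that same long edge is shared with the already-specified $d_2$-face, where it equals $(\gamma\beta)\smallcirc\alpha$. That is the comparison forcing equality. This is what you mean, but the wording could be tightened.
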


\noindent\emph{Remarks.}  By associating to each simplicial  map $\Delta^{\<n}\to K$ the image of the nondegenerate $n$-simplex $\iota_n$, one gets a bijective correspondence between such maps and $n$-simplices of $K$. (See, e.g., \cite[p.\,6]{GJ}.) 

By associating to each simplicial  map $\Lambda_i^n\to K$ the image of the sequence $(d_j\iota_n)_{0\le j\le n,\; j\ne i}$, one gets a bijective correspondence\va{.7} between such maps and sequences $(y_j)_{0\le j\le n,\; j\ne i}$ of $(n-1)$-simplices of $K$\va{.7}  such that $d_jy_k=d_{k-1}y_j$ if  $j<k $ and $j,\> k\ne  i$. (See \cite[p.\,10, Corollary 3.2]{GJ}.)

Thus $(*)$ means that \emph{for all\/ $n>0$ and\/ $0<i<n$, if\/ $\lambda$ is the map from the set of\/ $n$-simplices of\/~$K$ to the set of such sequences\/ $(y_j)$ that takes an $n$-simplex\/~$y$ to the sequence\/ 
$(d_jy)_{0\le j\le n,\; j\ne i}\>,$ then $\lambda$ is bijective}.

\begin{subdef}\label{ic}
An \emph{\ic}\ is a simplicial set\/ $K$  such that for all $n>0$ and $0<i<n$, every simplicial map $\Lambda_i^n\to K$ extends to a simplicial map\/ $\Delta^{\<n}\to K\<$, i.e., a $K$ for which the preceding map $\lambda$ is \emph{surjective}.

A \emph{functor} from one \ic\ to another is a map of simplicial sets.
\end{subdef}

Thus \ics\ and their functors form a full subcategory of~$\sS\textup{et}_\Delta$, one that itself has a full subcategory canonically isomorphic to $\sC$at.

\begin{subex}\label{complexes} 
To any dg category $\sC$ (one whose arrows between two fixed objects are complexes of abelian groups, composition being bilinear)
one can assign the \emph{dg-nerve} $\N_{\textup{dg}}(\sC)$, an \ic\ whose construction is more complicated than that of the nerve~$\N(\sC)$ because the dg structure has to be taken into account. (For details, see \cite[\S1.3.1]{A}.)

For instance, the complexes in an abelian category~$\sA$ can be made into a dg category
$\sC_{\textup{dg}}(\sA)$ by defining $\Hom(E, F)$ for any complexes $E$ and~$F$ to be the complex 
of abelian groups that is
$\Hom\!\big(E, F[n]\big)$ in degree $n$, with the usual differential.  \va{.6} When $\sA$ is a Grothendieck abelian category, we will see below (Example~\ref{derived}) how one extracts from the \ic\ $N_{\textup{dg}}(\sC_{\textup{dg}}(\sA))$ the usual derived category $\D(\sA)$.
\end{subex}

\begin{subex}\label{spaces}
 To any topological category $\sC$---that is, one where the Hom sets are topological spaces and composition is continuous---one can assign a \emph{topological nerve} $\N_{\textup{top}}(\sC)$, again more complicated than the usual nerve $N(\sC)$ \cite[p.\,22, 1.1.5.5]{T}.

CW-complexes are the objects of a topological category $\sC\sW$.  The topological nerve
$\sS\set \N_{\textup{top}}(\sC\sW)$ is an \ic,  the \emph{\ic\ of spaces}.~ 
(See \cite[p.\,24, 1.1.5.12\kf; p.\,52, 1.2.16.3]{T}.) Its role in the theory of \ics\ is analogous to the role of the category of sets in ordinary category theory. \looseness=-1
\end{subex}

\begin{subex}\label{Kan}
 \emph{Kan complexes}  are simplicial sets such that the defining condition of \ics\ holds for \emph{all} $i\in[0,n]$. Examples are \emph{the singular complex of a topological space} $X$ (a simplicial set that encodes the homotopy theory of $X$), \emph{the nerve of 
a groupoid} (=\:category with all maps  isomorphisms),  and \emph{simplicial abelian groups}. (See \cite[p.\,8, 1.1.2.1]{T} and \cite[\S I.3]{GJ}.)\looseness=-1

Kan complexes span a full subcategory of the category
of \ics, the inclusion having a right adjoint \cite[p.\,36, 1.2.5.3]{T}. The \emph{simplicial nerve} of this subcategory \cite[p.\,22, 1.1.5.5]{T} provides another model for the \ic\ of spaces \cite[p.\,51, 1.2.16]{T}.
\end{subex}

Most of the basic notions from category theory can be extended to \ics. Several examples
will be given as we proceed.  A first attempt at such an extension would be to express a property of categories in terms of their nerves, and then to see if this formulation makes sense for arbitrary \ics. (This will not always be done explicitly; but as \ic\ notions are introduced, the reader might check that when  restricted to nerves, these notions reduce to the corresponding classical ones.)

\begin{subex} An \emph{object} in an \ic\  is a 0\kf-simplex. A \emph{map} $f$ in an \ic\ is a 1-simplex. The \emph{source} (resp.~\emph{target}) of $f$ is the object $d_1f$ (resp.~$d_0f$).
The \emph{identity map} $\id_x$ of an object $x$ is the map $s_0x$, whose source and target are both $x$.
\end{subex}

\medskip

Some  history and motivation related to \ics\  can be gleaned, e.g., starting from {\tt <http://ncatlab.org/nlab/show/quasi-category>}.\va1

The notion of \ic\ as a generalization of that of category grew out of the study of operations in the homotopy category of topological spaces, for instance the composition of paths. Indeed, as will emerge, the basic effect of removing unicity from condition $(*)$ above to get to \ics\ (Definition~\ref{ic}) is to replace \emph{equality} of maps in categories with a \emph{homotopy}
relation, with all that entails.
 
 Topics of foundational importance in homotopy theory, such as \emph{model categories,} or
 \emph{spectra} and their products, are closely related to, or can be treated via, \ics\ 
 \cite[p.\,803]{T}, \cite[\S1.4, \S6.3.2]{A}.   Our concern here will mainly be with relations to algebra.

\section{The homotopy category of an \ic.}
\begin{cosa}
The nerve functor of Proposition~\ref{nerve functor} has a left adjoint $h\colon\sS\textup{et}_\Delta\to \textup{$\sC$at}$, the \emph{homotopy functor}, see \cite[p.\,28, 1.2.3.1]{T}. 

If the simplicial set  $\sC$ is an \ic, the homotopy category $h\sC$ can be constructed as follows. 
For maps $f$ and $g$ in $\sC$, write $f\sim g$ (and say  that ``$\<f$ is homotopic to $g$") if there is a 2-simplex $\sigma$ in $\sC$ such that $d_2\sigma=f$, $d_1\sigma=g$ and
$d_0\sigma=\id_{d_0\mkern-.5mu g}=\id_{d_0 f}\>$:
\[
 \bpic[xscale=2, yscale=1.8]
  \node(12) at (2,-1){$\bullet$};

  \node(21) at (1,-2){$\bullet$};
  \node(23) at (3,-2){$\bullet$};

  \draw[->](21)--(23) node[below = 1pt, midway, scale=.75]{$f$};
  \draw[->] (21)--(12) node[above=1pt, midway, scale=.75]{$g\mkern5mu$};
  \draw[->] (23)--(12) node[above, midway, scale=.75]{$\mkern25mu\id$};
  
  \node at (2,-1.6)[scale=.9]{$\sigma$};
 \epic
\]
(This can be intuited as the skeleton of a deformation of $f$ to $g$ through a ``continuous family" of maps with fixed source and target.)
Using the defining property of \ics, one shows that this homotopy relation is an equivalence relation. Denoting the class of $f$ by $\bar f$, one defines the composition 
$\bar f_2\smallcirc\! \bar f_1$ to~be $\bar h$ for any $h$ such that there exists a 2-simplex
\[
 \bpic[xscale=2, yscale=1.8]
  \node(12) at (2,-1){$\bullet$};

  \node(21) at (1,-2){$\bullet$};
  \node(23) at (3,-2){$\bullet$};

  \draw[->](21)--(23) node[below = 1pt, midway, scale=.75]{$f_1$};
  \draw[->] (21)--(12) node[above=1pt, midway, scale=.75]{$h\mkern14mu$};
  \draw[->] (23)--(12) node[above, midway, scale=.75]{$\mkern20mu f_2$};

 \epic
\]
One shows that this composition operation is well-defined, and associative.
There results a category whose objects are those of $\sC$, and whose maps are the homotopy equivalence classes of maps in~$\sC$, with composition as just described. (For details, see \cite[\S1.2.3]{T}.) This is the homotopy category $h\sC$.\va2\looseness=-1
\end{cosa}

\begin{ex}\label{hspaces}
Let $\sS$ be the \ic\ of spaces (Example~\ref{spaces}). Its homotopy category
$\sH\set \h\sS$ is called the \emph{homotopy category of spaces}.
The objects of $\sH$ are CW-complexes, and the maps are homotopy-equivalence classes
of continuous maps. (See \cite[p.\,16]{T}.) 
\end{ex}

\begin{ex}[extending Example~\ref{complexes}] \label{derived}
In the category of complexes $\textup{Ch}(\CA)$ in a Grothendieck category~$\CA$,\va{.5}  the (injectively) \emph{fibrant} objects are those complexes~$I$ such~that for any $\CA$-diagram of complexes $X\xot{\lift.5,\>\>s\>,}Y\xto{\lift1.1,\>f,}I$ with $s$ both\va{.7} a (degreewise) monomorphism and a quasi-isomorphism, there exists $g\colon X\to I$ such that $gs=f$.

\begin{sublem} \label{q-inj and fibrant}
A complex\/ $I$ is q-injective $(\<\<$aka K-injective$)$ if and only if $I$~is homotopy-equivalent to a fibrant complex.
\end{sublem}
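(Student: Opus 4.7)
The plan is to prove each implication separately, arranging matters so that the easier ``$\Leftarrow$'' direction can be reused inside the harder ``$\Rightarrow$'' one.

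\emph{$(\Leftarrow)$.} The property of being q-injective is invariant under chain-homotopy equivalence, since it concerns only homotopy classes of chain maps out of acyclic complexes. So it suffices to prove that every fibrant complex $J$ is q-injective. Given $f\colon A\to J$ with $A$ acyclic, I would show $f$ is null-homotopic by forming the canonical short exact sequence
\begin{equation*}
0\lto J\xto{\;\iota\;}\textup{Cone}(f)\lto A[1]\lto 0.
\end{equation*}
Here $\iota$ is automatically a monomorphism, and it is a quasi-isomorphism because $A[1]$ is acyclic. Applying the lifting property that defines fibrancy to $\iota$ and $\id_J$ produces $r\colon\textup{Cone}(f)\to J$ with $r\iota=\id_J\>$; and splittings of this inclusion correspond bijectively to null-homotopies of $f$ (standard mapping-cone computation). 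Hence $f\sim 0$, as required.

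\emph{$(\Rightarrow)$.} Invoke the injective model structure on $\textup{Ch}(\CA)$---which exists for every Grothendieck abelian category $\CA$ by Beke's theorem---to factor the terminal map $I\to 0$ as a trivial cofibration $i\colon I\to I'$ followed by a fibration $I'\to 0$. Then $I'$ is fibrant and $i$ is a monic quasi-isomorphism. Q-injectivity of $I$ combined with $i$ being a quasi-isomorphism implies that precomposition with $i$ gives a bijection on homotopy classes of chain maps into $I$, so there exists $r\colon I'\to I$ with $ri\sim \id_I$. By the already-proven $(\Leftarrow)$ direction, the fibrant complex $I'$ is itself q-injective, so precomposition with $i$ is also a bijection on homotopy classes of chain maps into $I'$. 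Since
\begin{equation*}
(ir)\smallcirc i \>=\> i\smallcirc(ri)\>\sim\> i \>=\> \id_{I'}\smallcirc i,
\end{equation*}
one concludes $ir\sim\id_{I'}$. Thus $i\colon I\to I'$ is a chain-homotopy equivalence with quasi-inverse $r$, exhibiting $I$ as homotopy-equivalent to the fibrant complex $I'$.

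\emph{Main obstacle.} The substantive external input is the existence of the injective model structure on $\textup{Ch}(\CA)$, which rests on $\CA$ being a Grothendieck abelian category so that the small-object argument supplies the two required factorizations. The only delicate step internal to the proof is the bootstrapping in $(\Rightarrow)$: one must feed the auxiliary complex $I'$ back through the $(\Leftarrow)$ direction in order to upgrade the one-sided relation $ri\sim \id_I$ into a genuine two-sided chain-homotopy equivalence.
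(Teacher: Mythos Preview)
Your proof is correct and follows the same skeleton as the paper's: show that fibrant $\Rightarrow$ q-injective and invoke homotopy-invariance for $(\Leftarrow)$; for $(\Rightarrow)$, factor $I\to 0$ in the injective model structure as (trivial cofibration)$\>\smallcirc\>$(fibration) to obtain a monic quasi-isomorphism $I\hookrightarrow I'$ with $I'$ fibrant, hence q-injective, and then conclude that a quasi-isomorphism between q-injective complexes is a homotopy equivalence. The only difference is packaging: where the paper cites \cite[1.3.5.11]{A} and \cite[2.3.8]{li} for the first implication and \cite[2.3.2.2]{li} for the last step, you supply self-contained arguments---the cone/retraction trick for $(\Leftarrow)$ and the $i^*$-bijection bootstrap for $(\Rightarrow)$. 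Both versions are standard; yours is simply more explicit.
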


\begin{proof}
Fix a fibrant $Q$. By \cite[p.\,97, 1.3.5.11]{A},   if the complex~$M$ is exact then so is the complex~
$\Hom^\bullet(M,Q)$; and by \cite[2.3.8(iv) and (2.3.8.1)]{li}, this means that $Q$ is q\kf-injective,
whence so is any complex homotopy-equivalent to $Q$.

If $I$ is q\kf-injective, then factoring $I\to 0$ as fibration$\smallcirc$(trivial cofibration) (\cite[p.\,93, 1.3.5.3]{A}) one gets a 
monomorphic quasi-isomorphism~\mbox{$j\colon I\hookrightarrow Q$} with $Q$ fibrant, hence q\kf-injective; so  $j$  is a homotopy equivalence \cite[2.3.2.2]{li}. 
\end{proof}

\begin{subrems*}
If the complex $Q$ is bounded below and injective in each degree, then $Q$ is fibrant, hence q\kf-injective, see \cite[p.\,96, 1.3.5.6]{A}.

2. Any split short exact sequence, extended infinitely both ways by zeros, is a complex
homotopically equivalent to the fibrant complex 0, but not necessarily itself fibrant, since fibrant complexes
are termwise injective, see again \cite[p.\,96, 1.3.5.6]{A}.
\end{subrems*}

Next, for any additive category $\CA$, two maps in the dg-nerve $\N_{\textup{dg}}(\textup{Ch}(\CA))$ are homotopic iff they are so as chain maps, see \cite[p.\,64, 1.3.1.8]{A}.  Thus the homotopy category h$\N_{\textup{dg}}(\textup{Ch}(\CA))$ is just 
the category  whose objects are the $\CA$-complexes and maps are 
homotopy-equivalence classes of chain\va2  maps.\looseness=-1

\pagebreak[3]
Similarly, when $\CA$ is a Grothendieck abelian category and $\textup{Ch}(\CA)^0$
is the full subcategory of $\textup{Ch}(\CA))$ spanned by the fibrant complexes,  the homotopy\- category of the derived $\infty$-category\va{.6} 
${\sD}(\CA)\set \N_{\textup{dg}}(\textup{Ch}(\CA)^0\>)$  \cite[p.\,96, 1.3.5.8 and p.\,65, 1.3.1.11]{A} is the quotient of\va{.3} 
$\textup{Ch}(\CA)^0$ by the homotopy relation on chain maps, and thus is equivalent to the similar
category whose objects are the fibrant compexes, which by \ref{q-inj and fibrant} is equivalent to the usual derived category $\D(\CA)$.  
 
 In summary: $h\sD(\CA)$ \emph{is equivalent to} $\D(\CA)$.\va2

(A more general result for any dg category is in \cite[p.\,64, 1.3.1.11]{A}.) 

\end{ex}

\begin{rem}
The homotopy category of a \emph{stable} \ic\ is triangu\-lated. (See Introduction to \cite[\S1.1]{A}.)
For instance, the \ic\ ${\sD}(\CA)$ (just above) is stable \cite[p.\,96, 1.3.5.9]{A}.
So ~is the \ic\ of \emph{spectra}---whose homotopy category underlies stable homotopy theory \cite[p.\,16, 1.1.1.11]{A}. 
\end{rem}

\begin{ex}\label{model cat} A localization $\sD\to\sD_V$ of an ordinary 
category~$\sD$ w.r.t a set~$V$ of maps  in $\sD$ is an initial object in the category of functors with source~$D$ that take the maps in $V$ to isomorphisms. 

A  localization $\sC\to\sC[W^{-1}]$ of an \ic\ $\sC$ w.r.t a set $W$ of maps (i.e., 1-simplices) in $\sC$ is similarly universal \emph{up to homotopy} for those \mbox{$\infty$-functors} out of $\sC$ that take the maps in $W$ to equivalences. (For more precision, see \cite[p.\,83, 1.3.4.1]{A}.) Such a localization exists, and is  determined uniquely up to equivalence by $\sC$ and $W$ \cite[p.\,83, 1.3.4.2]{A}. 

For functors of the form $\sC\to\N(\sD)$ with $\sD$ an ordinary category, 
the words ``up to homotopy" in the preceding paragraph can be omitted.
(This follows from the precise definition of localization, because in 
\ics\ of the form Fun$(\sC, \N(\sD))$---see \S\ref{Fun}---the only equivalences are identity maps.)

So  composition with the localization map (see \eqref{comp}) gives a natural bijection from the set of $\infty$-functors
$\sC[W^{-1}]\to{}$N$(\sD)$ to the set of those $\infty$-functors $\sC\to{}$N$(\sD)$ that take the maps in $W$ to equivalences, that is, from the set of functors
h$(\sC[W^{-1}])\to\sD$ to the set of those functors h$\sC\to\sD$ that take the maps in the image $\bar W$ of~$W$ to isomorphisms. Hence there is a natural isomorphism
\begin{equation}\label{homotopy and localization}
\h\big(\sC[W^{-1}]\big)\iso \big(\<\<\h\sC\big)_{\bar W}
\end{equation}
giving commutativity of the homotopy functor with localization. \va2

%
%
%
%
%
%

For instance, to every \emph{model category}~{\bf A} one can associate naturally 
an ``underlying \ic"~$\mathbf A_\infty$.  Under mild assumptions, $\mathbf A_\infty$ can be taken to be the localization (N$(\mathbf A))[W^{-1}]$,  with $W$  the set of weak equivalences 
in~{\bf A}. Without these assumptions, one can just replace $\mathbf A$ by its full subcategory spanned by the cofibrant objects \cite[p.\,89, 1.3.4.16]{A}.

Equation~\ref{homotopy and localization} shows
 that the homotopy category h$\mathbf A_\infty$ is canonically isomorphic to $\mathbf A_{\bar W}$, the classical homotopy category of~{\bf A} \cite[p.\,75, Thm.\,1.11]{GJ}.

Example~\ref{derived}, with $\CA$  the category of 
right modules over a fixed ring~$R$, is essentially the case where $\mathbf A$ is the category of
complexes in~$\CA$, with  ``\kf projective"~model structure as in \cite[p.\,814, 8.1.2.8]{A}.
\end{ex}

\begin{cosa}\label{map space}
An important feature of \ics\ is that any two objects determine not just the set of maps from one to the other, but also a topological \emph{mapping~space}. In fact, with $\sH$ as in~\ref{hspaces}, the homotopy category $\h\sC$ of an \ic\ $\sC$ can be upgraded to an \emph{$\sH$-enriched} category, as follows: 

For any objects $x$ and $y$ in $\sC$, one considers not only maps with source~
$x$ and target $y$, but all ``arcs"  of $n$-simplexes $(n\ge 0)$ that go from  the trivial $n$-simplex $\Delta^{\<n}\to\Delta^{\<0}\xto{x\>} \sC$ to the trivial $n$-simplex $\Delta^{\<n}\to\Delta^{\<0}\xto{y\>} \sC$---more precisely,  maps
$\theta\colon\Delta^{\<1}\times\Delta^{\<n}\to\sC$ such that the compositions
\[
\Delta^{\<n}=\Delta^{\<0}\times\Delta^{\<n}\xto{\<j\times\id\>\>}\Delta^{\<1}\times\Delta^{\<n}
\xto{\ \theta\ }\sC
\qquad(j=0,1)
\]
are the unique $n$-simplices in the constant simplicial sets $\{x\}$ and $\{y\}$ respectively.
Such $\theta$ are the $n$-simplices of a Kan subcomplex $M_{x,y}$ of the ``function complex"
$\mathbf{Hom}(\Delta^{\<1}, \sC)$ \cite[\S I.5]{GJ}. The \emph{mapping space} $\Map\sC(x,y)$
is the geometric realization of $M_{x,y}$. It is a CW-complex \cite[\S I.2]{GJ}. For objects $x,y,z\in\sC$, there is in $\sH$ a composition map
\[
\Map\sC(y,z)\times \Map\sC(x,y)\lto\Map\sC(x,z),
\]
that, unfortunately, is not readily describable (see \cite[pp.\,27--28, 1.2.2.4, 1.2.2.5]{T}); and this composition satisfies associativity.

The unenriched homotopy category is the underlying ordinary category, obtained by replacing
each $\Map\sC(x,y)$ by the set $\pi_0\Map\sC(x,y)$ of its connected components.
\end{cosa}

\begin{ex}\label{pointless}
When $\sC=\N(C)$ for an ordinary category $C\<$, the preceding discussion is pointless: the spaces $\Map\sC(x,y)$ are isomorphic in $\sH$ to discrete topological spaces 
(see, e.g., \cite[p.\,22, 1.1.5.8; p.\,25, 1.1.5.13]{T}), so that the $\sH$-enhancement of $\h\N(C)$ is trivial;  and one checks that the
counit map is an isomorphism of ordinary categories $\h\N(C)\iso C$. 

More generally---and much deeper, for any topological category $\sD$ and any simplicial category $\sC$ that is \emph{fibrant}---that is,  all its mapping complexes are Kan complexes, one has natural $\sH$-enriched isomorphisms
\begin{align*}
\h\N_{\textup{top}}(\sD)\iso \h\sD\textup{\ \ resp.\ \ }  \h\N_\Delta(\sC)\iso \h\sC
\end{align*}
where $\N_{\textup{top}}(\sD)$ is the topological nerve of $\sD$ (an \ic\ \cite[p.\,24, 1.1.5.12]{T}) and
$\N_\Delta(\sC$) is the simplicial nerve of $\sC$ (an \ic\ \cite[p.\,23, 1.1.5.10]{T}), where  the topological homotopy category $\h\sD$ is obtained from~$\sD$ by replacing each topological space $\Map{\sD}(x,y)$ by a weakly homotopically equivalent CW complex considered as an object of $\sH$ \cite[p.\,16, 1.1.3.4]{T}, 
and the simplicial homotopy\- 
category~$\h\sC$ is obtained from $\sC$ by replacing each simplicial set $\Map{\sC}(x,y)$ by its geometric realization considered as an object of $\sH$ (see \cite[p.\,19]{T}). Using the description of the homotopy category of a simplicial set given in \cite[p.\,25, 1.1.5.14]{T}, one finds that
the first isomorphism is essentially \cite[p.\,25, 1.1.5.13]{T}; and likewise, the second is essentially
\cite[p.\,72, 2.2.0.1]{T}.
\end{ex}

\begin{ex}\label{homotopy gp} Let $\sD$ be a dg category.  For any two objects $x,y\in\sD$ replace the mapping complex
$\Map{\sD}(x,y)$ by the simplicial abelian group associated by the Dold-Kan correspondence to the truncated complex $\tau_{\halfsize{$\le0$}}\Map{\sD}(x,y)$, to produce a 
simplicial category~$\sD_{\<\<\Delta}\>$. (See \cite[p.\,65, 1.3.1.13]{A}, except that indexing here is cohomological rather than homological.)

For example, with notation as in ~\ref{derived}, 
$\D(\CA)$ is also the homotopy category of  the simplicial nerve of 
the simplicial category thus associated to $\textup{Ch}(\CA)^0$ \cite[p.\,66, 1.3.1.17\kf\kf]{A}.

For the category  $\CA$ of abelian groups, and $\sD\set\textup{Ch}(\CA)^0$,  
{\cite[p.\,47, Remark 1.2.3.14]{A}} (in light of \cite[p.\,46, 1.2.3.13]{A}) points to an agreeable interpretation of the~homotopy groups  of the Kan complex
$\textup{Map}_{{\sD}_{\!\Delta}}\!(x,y)$\va{.6} with base point 0 (or of its geometric realization, see \cite[bottom, p.\,60]{GJ}):
\[
\pi_n(\textup{Map}_{{\sD}_{\!\Delta}}\!(x,y))\cong\textup{H}^{-n}\<\Map{\sD}(x,y)=:\ext^{-n}(x,y)\qquad(n\ge 0).
\]
(See also  \cite[p.\,32, 1.2.1.13]{A} and \cA 29; \S1.2, 2nd paragraph;.)

\end{ex}

\begin{cosa}
A functor $F\colon \sC_1\to\sC_2$ between two \ics\ induces a functor 
$\h F\colon\h\sC_1\to\h\sC_2$ of $\sH$-enriched categories: the functor $\h F$ has the same effect on objects as $F$ does, and  there is a natural family of
 $\sH$-maps
\[
\h F_{x,y}\colon\Map{\sC_1}\<\<(x,y) \to \Map{\sC_2}\<(Fx,Fy)
\qquad(x,y\textup{ objects in }\sC_1)
\]
that respects composition (for whose existence see 
\cite[p.\,25, 1.1.5.14 and p.\,27, 1.2.2.4]{T}.)

The functor $F$ is called a \emph{categorical equivalence} if for all $x$ and $y$, $\h F_{x,y}$
is a homotopy equivalence (\,=\:isomorphism in $\sH\>$), and for every object $z\in\sC_2$,
there exists an object $x\in\sC_1$ and a map $f\colon z\to Fx$ whose image in $\h\sC_2$ is an isomorphism.

\begin{ex} For any \ic\ $\sC,$ the unit map $\sC\to\N(\h\sC)$ induces an isomorphism of ordinary homotopy categories; but it is a categorical equivalence
only when the mapping spaces of $\sC$ are isomorphic in $\sH$ to discrete topological spaces,
i.e., their connected components are all contractible.
\end{ex}

The ``interesting" properties of \ics\ are  those
which are invariant under categorical equivalence.
In other words, the $\sH$-enriched homotopy category is the fundamental invariant of an \ic\ $\sC$; the role of $\sC$ itself is to generate information about $h\sC$. 

For this purpose, 
$\sC$ can be replaced by any equivalent \ic, i.e., an \ic\ that can be joined to~$\sC$ by a chain of equivalences 
(or even by equivalent topological or simplicial categories, as explained in \cite[\S 1.1]{T}, and illustrated by Example~\ref{pointless} above). 
Analogously, one can think of a single homology theory in topology or algebra being constructed in various different ways.\va1

Along these lines, a \emph{map} in $\sC$ is called an equivalence if the induced map in $\h\sC$ is an isomorphism; and the interesting properties of  objects in $\sC$ are those which are invariant under equivalence.
\end{cosa}

\begin{ex}
An \ic\ $\sC$ is a Kan complex (\S\ref{Kan}) if and only if every map in $\sC$ is an equivalence, i.e., $h\sC$ is a groupoid \cite[\S\,1.2.5]{T}. For a Kan complex $\sC$,   $\h\sC$ is the \emph{fundamental groupoid} of~$\sC$  (or of its geometric realization), see \cite[p.\,3, 1.1.1.4]{T}.
\end{ex}

\section{Colimits}
To motivate the definition of colimits in \ics,  recall that a colimit of a functor 
$\tilde p\colon K\to C$ of ordinary categories is  an initial object in the category $C_{\tilde p/}$
whose objects are the extensions of $\tilde p$
to the right cone $K^\tr$---that is, the disjoint union of $K$ and the
trivial category $*$ (the category with just one map) together with one arrow from each object of $K$ to the unique object of $*\>$---and whose maps are the obvious ones. 

Let us now reformulate this remark in the language of \ics. (A fuller discussion appears in 
\cite[\S\S\,1.2.8, 1.2.12 and 1.2.13]{T}.)

First, an \emph{initial object} in an \ic\ $\sC$ is an object $x\in\sC$ such that 
for every object $y\in\sC$, the mapping space $\Map{\sC}(x,y)$ is contractible.
It is equivalent to say that $x$ is an initial object in 
the $\sH$-enriched homotopy category~$\h\sC$.  Thus any two initial objects in $\sC$ are equivalent. (In fact,  if nonempty, the set of initial objects in $\sC$ spans a contractible
Kan subcomplex of $\sC$ \cite[p.\,46, 1.2.12.9]{T}.)

Next, calculation of the nerve of the above right cone $K^{\>\tr}$ suggests the following definition. For \emph{any} simplicial set $K\<$, the \emph{right cone} $K^{\>\tr}$ is the simpicial set whose
set of $n$-simplices $K_n^{\tr}$ is the disjoint union of all the sets $K_m\ (m\le n)$ and   
 $\Delta_n^{\<0}$ (the latter having a single member $*_n$),
with the face maps $d_j$---when $n>0\>\>$---(resp.~degeneracy maps~$s_j$) restricting on~$K_m$ to the usual face (resp.~degeneracy) maps for $0\le j\le m$
(except that $d_0$ maps all of ~$K_0$ to~$*_{n-1}$), and to identity maps for $m<j\le n$, 
and taking $*_n$ to~$*_{n-1}$ (resp.~$*_{n+1}$). It may help here to observe that for $n>0$, the 
nondegenerate $n$-simplices in $K^{\>\tr}$ are just the nondegenerate $n$-simplices in~$K_n$ together with the nondegenerate $(n-1)$ simplices in~$K_{n-1}$, the latter visualized as being  joined to the ``vertex" $*_n\>$.  

This  is a special case of the construction (which we'll not need) of the \emph{join} of two simplicial sets \cite[\S1.2.8]{T}. The join of two \ics\ is an \ic\ \cite[p.\,41, 1.2.8.3]{T}; thus
if $K$ is an \ic\ then so is $K^{\>\tr}\<$.

Define $K^{\>\tr^{\<n}}$  inductively by $K^{\>\tr^{\<1}}\set K^\tr$ and (for $n>1$)
$K^{\>\tr^{\<n}}\set(K^{\>\tr^{\<n-1}})^\tr$.
There is an obvious embedding of $K$ into $K^\tr$, and hence into $K^{\>\tr^{\<n}}\<$. 
For a map 
\mbox{$p\colon K\to\sC$} of \ics, the corresponding \emph{undercategory} $\sC_{p/}$ is a simplicial set whose $(n-1)$-simplices ($n>0$)
are the extensions of $p$ to maps $K^{\>\tr^{\<n}}\to \sC$, see \cite[p.\,43, 1.2.9.5]{T}. This undercategory is an \ic\ \cite[p.\,61, 2.1.2.2]{T}.

\begin{subdef} A \emph{colimit} of a map $p\colon K\to\sC$ of \ics\ is an initial object in
the \ic\ $\sC_{p/}$.
\end{subdef}

Being an object (= 0\kf-simplex) in $\sC_{p/}\>$, any colimit of $p$ is an extension of~$p$ to 
a map $\bar p\colon K^\tr\to\sC$. Often one refers loosely to the image under $\bar p$ of the vertex $*_0\in K^\tr$ as the colimit of $p$.

\smallskip
Some instances of colimits are the $\infty$-categorical versions of \emph{coproducts} (where $K$ is the nerve of a category whose only maps are identity maps),
\emph{pushouts} (where $K$ is the horn $\Lambda_0^2\>$),  and \emph{coequalizers}
(where $K$ is the nerve of a category with exactly two objects $x_1$ and $x_2$, and such that $\Map{}(x_i,x_j)$ has cardinality $j-i+1$), see \cite[\S4.4]{T}.

\begin{ex} Suppose $\sC$ is the nerve $\N(C)$ of an ordinary category~$C$. A functor
$p\colon K\to \sC$ corresponds under the adjunction $\h\dashv \N$ to a functor 
$\tilde p\colon \h K\to C$. There is a natural isomorphism of ordinary categories
\[\h\,(K^\tr)\cong (\h K)^\tr,
\]
whence an extension of $p$ to $K^\tr$ corresponds under $\h\dashv \N$ to 
an extension of\kf~$\tilde p$ to $(\h K)^\tr$. More generally, one checks that there is a natural isomorphism 
\[
\sC_{p/} = \N(C)_{p/}\cong\N(C_{\tilde p/}).
\]
Any colimit of $p$ is an initial object in 
$\h\sC_{p/} \cong\h \N(C_{\tilde p/})\cong C_{\tilde p/}\,$; that is,\va{.6} \emph{the homotopy functor takes a colimit of\/ $p$ to a colimit of} $\tilde p$.

\smallskip
For more general $\sC$, and most $p$, the homotopy functor does not preserve colimits.
For example, in any \emph{stable} \ic, like the derived \ic\ of a Grothendieck abelian category
\cite[p.\,96, 1.3.5.9]{A}, the pushout of 0 with itself over an object $X$ is the suspension $X[1]$
(see \cite[p.\,19, bottom paragraph]{A}), but the pushout in the homotopy category is 0.

 \end{ex}

\section{Adjoint Functors}
For a pair of functors (=\:simplicial maps) $\sC\xto{\>f\>}\sD\xto{\>g\>}\sC$ of \ics\ 
one says that $f$ \emph{is a left adjoint of}~$g$, or that $g$~\emph{is a right adjoint of}~$f\<$, if there exists a 
homotopy $u$ from the identity functor $\id_\sC$ to $gf$ (that is, a  simplicial map
$u\colon\sC\times \Delta^{\<1}\to\sC$ whose compositions with the maps
\[
\sC=\sC\times\Delta^{\<0}\xto{\{i\}}\sC\times\Delta^{\<1}\qquad(i=0,1)
\]
corresponding to the 0\kf-simplices $\{0\}$ and $\{1\}$ of $\Delta^{\<1}$ are $\id_\sC$ and $gf$, respectively) such that, for all objects $C\in\sC$ and $D\in\sD$, the natural composition 
\[
\Map{\sD}(f(C), D)\to\Map{\sC}\big(gf(C),g(D)\big)\xto{\!u(C)\>} \Map{\sC}(C, g(D))
\]
is an isomorphism in $\sH$.\va1

(For an extensive discussion of adjunction, see \cite[\S5.2]{T}.  The foregoing definition
comes from  \cite[p.\,340, 5.2.2.8]{T}.)\smallbreak

Such adjoint functors $f$ and $g$ induce adjoint functors 
\smash{$\h\sC\xto{\>\h f\>}\h\sD\xto{\>\h g\>}\h\sC$}
between the respective $\sH$-enriched homotopy categories.

As a partial converse, it holds that if the functor $\h f$ induced by a functor $f\colon\sC\to\sD$
between \ics\ has an $\sH$-enriched right adjoint, then $f$~itself has a right adjoint \cite[p.\,342, 5.2.2.12]{T}.

\smallskip The following \emph{Adjoint Functor Theorem} gives a powerful criterion 
(to be used subsequently) for 
$f\colon \sC\to\sD$ to have a right adjoint. It  requires 
a restriction---\emph{accessibility}---on the sizes of the \ics\ $\sC$ and $\sD$. This means roughly that $\sC$ is generated under filtered colimits by a small
$\infty$-subcategory, and similarly for~$\sD$, see \cite[Chap.\,5]{T}. 
(If necessary, see also \cite[p.\,51]{T} for the explication of ``small" in the context of Grothendieck universes.) Also, $\sC$ and 
$\sD$ need to admit colimits of all maps they receive from small simplicial sets $K\<$.  The conjunction of these properties is called \emph{presentability} \cite[\S5.5]{T}.\looseness=-1   

For example, the \ic\ $\sS$ of spaces (see \ref{spaces}) is presentable \cite[p.\,460, 5.5.1.8]{T}. It follows that the \ic\ of spectra 
Sp $\set\textup{Sp}(\sS_*)$ (see \cite[p.\,116, 1.4.2.5]{A} and \cite[p.\,122, 1.4.3.1]{A})
is presentable. Indeed, presentability is an equivalence-invariant property of \ics\kern-.5pt,
see e.g.,  \cite[p.\,457, 5.5.1.1(4)]{T}, hence by the presentability of $\sS$ and by \cite[p.\,719; 7.2.2.8]{T}, \cite[p.\,242; 4.2.1.5]{T} and \cite[p.\,468, 5.5.3.11]{T}, 
$\sS_*$  is presentable, whence, by \cite[p.\,127, 1.4.4.4]{A}, so is Sp. 

\begin{thm} \textup{(\cite[p.\,465, 5.5.2.9]{T}.)}\label{Brown} 
A functor\/ $f\colon \sC\to\sD$ between presentable \ics\
has a right adjoint iff it preserves small colimits.
\end{thm}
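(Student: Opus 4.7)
The plan is to handle the two directions separately. The forward direction is formal, while the converse is where presentability is essential, and where the main technical work lies.

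Suppose first that $f$ admits a right adjoint $g$. Given any diagram $p\colon K\to\sC$ from a small simplicial set $K$ with a colimit $C_\infty\in\sC$, one obtains for each $D\in\sD$ a chain of $\sH$-equivalences
\[
\Map{\sD}(f(C_\infty), D)\simeq \Map{\sC}(C_\infty, g(D))\simeq \lim\,\Map{\sC}(p(-), g(D))\simeq \lim\,\Map{\sD}(f\smallcirc p(-), D),
\]
the outer two being the adjunction and the middle one the universal property of $C_\infty$. The $\infty$-categorical Yoneda lemma (\cite[\S 5.1.3]{T}) then forces $f(C_\infty)$ to be a colimit of $f\smallcirc p$, so $f$ preserves small colimits.

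For the converse, the idea is to construct $g$ pointwise by representability and then assemble the values into a functor. Fix $D\in\sD$ and consider the $\infty$-functor $F_D\colon\sC^{\op}\to\sS$, $C\mapsto\Map{\sD}(f(C),D)$, which is the composite of $f^{\op}$ with the representable functor $\Map{\sD}(-, D)\colon\sD^{\op}\to\sS$. Representable functors send colimits to limits (Yoneda again), and $f$ preserves small colimits by hypothesis, so $F_D$ preserves small limits. The key input now is the representability theorem for presentable $\infty$-categories (\cite[p.\,458, 5.5.2.2]{T}): any small-limit-preserving functor $\sC^{\op}\to\sS$ on a presentable $\sC$ is representable. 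Applied to $F_D$, this yields an object $g(D)\in\sC$ together with a natural equivalence $\Map{\sC}(C,g(D))\simeq\Map{\sD}(f(C),D)$ for all $C\in\sC$.

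The remaining step, and the principal obstacle, is to upgrade the pointwise assignment $D\mapsto g(D)$ to an $\infty$-functor $g\colon\sD\to\sC$ that is right adjoint to $f$ in the sense of \cite[p.\,340, 5.2.2.8]{T}. In ordinary category theory this follows instantly from universal properties, but in the $\infty$-setting one must coherently specify all higher homotopies at once. The standard method is to package the pointwise equivalences into a single fibration $\sM\to\Delta^{\<1}$ that is simultaneously cartesian and cocartesian, with fibers $\sC$ and $\sD$; the two associated straightenings yield $f$ and $g$, and the bifibration condition is precisely what encodes the adjunction. The pointwise representability statements proven above are exactly what is needed to exhibit such an $\sM$; see \cite[\S\S 5.2.1--5.2.4]{T} for the detailed machinery.
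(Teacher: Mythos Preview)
The paper does not give its own proof of this theorem; it is simply quoted from \cite[p.\,465, 5.5.2.9]{T} and used as a black box in what follows. Your sketch is a faithful outline of Lurie's argument there: the forward direction is the standard fact that left adjoints preserve colimits, and the converse combines the representability criterion for presentable $\infty$-categories \cite[5.5.2.2]{T} with the fibrational description of adjunctions developed in \cite[\S5.2]{T}. So there is nothing in the paper itself to compare against, but your proposal is correct as a summary of the cited proof.
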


\begin{cosa}\label{Fun}
Let $\sC$ and $\sD$ be \ics. The
simplicial set $\mathbf{Hom}(\sC, \sD)$ \cite[\S I.5]{GJ} is an \ic,  denoted
$\Fun \sC \sD$ \cite[p.\,39, 1.2.7.3]{T}.  Its 0\kf-simplices are functors (=\:simplicial maps). Its 1-simplices are maps of functors: such a map $f\to g$ is, by definition, a simplicial map $\phi\colon \sC\times\Delta^{\<1}$ to~$\sD$ such that the following functor is $f$ when $i=0$ and $g$ when $i=1$:
\[
\sC=\sC\times\Delta^{\<0}\xto{\<\id\<\times\> i\>}\sC\times\Delta^{\<1}\xto{\,\phi\,}\sD.
\]

Let $\textup{Fun}^{\textup L}(\sC,\sD)$ (resp.~$\textup{Fun}^{\textup R}(\sC,\sD))$ be the full $\infty$-subcategories  spanned by the functors which are left (resp.~right) adjoints, that is, the \ics\ whose simplices
are all those in~$\Fun \sC\sD$ whose vertices are such functors. 

The \emph{opposite} $\sE^\op$ of an \ic\ $\sE$ \cite[\S\kf1.2.1]{T} is the simplicial set having the same
set $\sE_n$ of $n$-simplices as $\sE$ for all $n\ge0$, but with face and degeneracy operators
\begin{align*}
(d_i\colon \sE_n^\op\to \sE_{n-1}^\op) &\set (d_{n-i}\colon\sE_n\to\sE_{n-1}),\\
(s_i\colon \sE_n^\op\to \sE_{n+1}^\op) &\set (s_{n-i}\colon\sE_n\to\sE_{n+1}).
\end{align*}
It is immediate that $\sE^\op$ is also an \ic.\va1

The next result, when restricted to ordinary categories, underlies the notion of \emph{conjugate functors} (see, e.g., \cite[3.3.5--3.3.7\kf\kf]{li}.)

\enlargethispage*{5pt}
\begin{prop}\label{conjugacy}
There is  a canonical $(\<$up to homotopy$\>)$ equivalence
\begin{equation}\label{left-right}
\varphi\colon\textup{Fun}^{\textup R}(\sC,\sD)\eqv\textup{Fun}^{\textup L} (\sD,\sC)^\op.
\end{equation}
that takes any object $g\colon \sC\to \sD$ in\/ $\textup{Fun}^{\textup R} (\sC,\sD)$ 
to a left-adjoint functor\/ $g'\<$. 
\end{prop}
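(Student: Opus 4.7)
The idea is to introduce an intermediate $\infty$-category $\sM$ that classifies adjunctions with fibers $\sD$ and $\sC$, and to exhibit $\sM$ as canonically equivalent both to $\textup{Fun}^{\textup R}(\sC,\sD)$ and to $\textup{Fun}^{\textup L}(\sD,\sC)^{\op}$; the functor $\varphi$ is then defined by composing one equivalence with a homotopy inverse of the other.

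Following \cite[\S5.2.2]{T}, encode an adjunction $g'\dashv g$ by a map $p\colon M\to \Delta^{\<1}$ that is simultaneously a Cartesian and a coCartesian fibration, equipped with equivalences $p^{-1}(0)\simeq \sD$ and $p^{-1}(1)\simeq \sC$. Let $\sM$ be the $\infty$-category of such bifibrations (a full $\infty$-subcategory of the evident slice of $\Cat_\infty$ over $\Delta^{\<1}$). Straightening the coCartesian structure on $p$ produces a functor $\sD\to\sC$; by the characterization \cite[p.\,337, 5.2.2.1]{T}, requiring that $p$ be simultaneously Cartesian is equivalent to requiring that this functor admit a right adjoint, and the assignment extends, via the straightening/unstraightening equivalence \cite[\S3.2]{T}, to an equivalence $\sM\xrightarrow{\simeq}\textup{Fun}^{\textup L}(\sD,\sC)$. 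Symmetrically, straightening the Cartesian structure on the same $p$ produces a functor $\sC\to\sD$ admitting a left adjoint, yielding an equivalence $\sM\xrightarrow{\simeq}\textup{Fun}^{\textup R}(\sC,\sD)$.

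The crucial point is to track the variance on $1$-morphisms. A $1$-morphism in $\sM$ is a map of bifibrations over $\Delta^{\<1}$ restricting to an equivalence on each fiber; any such map automatically preserves Cartesian and coCartesian edges up to equivalence. Its coCartesian and Cartesian straightenings then produce natural transformations between the respective pairs of left and right adjoints, and these are paired by the $\infty$-categorical \emph{mate correspondence}, so they point in \emph{opposite} directions. Hence exactly one of the two equivalences above must be composed with the $(-)^{\op}$ twist, and we set
\[
\varphi\colon \textup{Fun}^{\textup R}(\sC,\sD) \xleftarrow{\,\simeq\,} \sM \xrightarrow{\,\simeq\,} \textup{Fun}^{\textup L}(\sD,\sC)^{\op}.
\]
On vertices $\varphi$ sends a right adjoint $g$ to a left adjoint $g'$ of $g$, as required.

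The main difficulty is the $1$-simplex statement, i.e.\ verifying the mate correspondence: that the two straightenings of a morphism in $\sM$ really do deliver natural transformations in opposite directions. This is done by unwinding how a map of bifibrations interacts with the unit/counit data implicit in Cartesian and coCartesian lifts of the unique nondegenerate edge of~$\Delta^{\<1}$, as in \cite[\S5.2.2]{T}. Once this mate-symmetry is in hand, everything else is a formal consequence of straightening/unstraightening, and the equivalence $\varphi$ is canonical up to homotopy because each of its two constituents is.
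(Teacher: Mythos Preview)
The paper does not actually supply a proof of this proposition: it is stated at the end of \S\ref{Fun} and the text passes immediately to the next section, with no argument or citation attached. The intended source is presumably \cite[5.2.6.2]{T}, and your approach---encoding an adjunction as a fibration over $\Delta^{\<1}$ that is simultaneously Cartesian and coCartesian, then reading off the two adjoint functors via (un)straightening in each direction---is precisely the circle of ideas Lurie uses there and in the surrounding discussion. So your proposal is correct and is the standard route; there is simply nothing in the paper to compare it against.

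One small point worth tightening: the $1$-morphisms in your $\sM$ should be required to be compatible with the \emph{given} identifications of the fibers with $\sD$ and~$\sC$ (i.e., to restrict to those specified equivalences), not merely to restrict to ``an equivalence on each fiber.'' Otherwise $\sM$ carries extra automorphisms coming from self-equivalences of $\sC$ and $\sD$, and the comparison maps to $\textup{Fun}^{\textup L}(\sD,\sC)$ and $\textup{Fun}^{\textup R}(\sC,\sD)$ will not be equivalences. With that adjustment your sketch goes through.
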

\end{cosa}

\section{Algebra objects in monoidal \ics}

A \emph{monoidal category} $\sM$ is a category together with
a monoidal structure, i.e., a \emph{product functor}
$\otimes\colon \sM\times \sM\to \sM$ that is associative up to isomorphism,
plus a \emph{unit object} $\cO$ and isomorphisms (unit maps)
\[\cO\otimes M\iso M\iso M\otimes \cO\qquad(M\in \sM)
\]
compatible with the associativity isomorphisms.

An \emph{associative algebra} $A\in \sM$ ($\sM$-algebra for short) is an object equipped with maps $A\otimes A\to A$ (multi\-plication)
and $\cO\to A$ (unit) satisfying associativity etc. \emph{up~to isomorphism}, such isomorphisms having the usual relations, expressed by commutative diagrams.

(No additive structure appears here, so one might be tempted to call algebras ``monoids." 
However, that term is reserved in \cite[\S2.4.2]{A} for a related, but different, construct.)

 \begin{exs} \label{monoids} 
 (a) $\sM\set{}$\{Sets\}, $\otimes$ is the usual direct product, and $\sM$-algebras are  monoids.\va2

 (b) $\sM\set\:$modules over a fixed commutative ring $\cO$, $\otimes$ is the usual tensor product over $\cO$, and $\sM$-algebras are  the usual $\cO$-algebras.\va2

(c) $\sM\set\:$dg modules over a fixed commutative dg ring $\cO$, $\otimes$ is the usual tensor product of dg $\cO$-modules,  and an $\sM$-algebra is a  dg $\cO$-algebra (i.e., a dg ring $A$ plus
a homomorphism of dg rings from $\cO$ to the center of $A$). \va2

 (d) $\sM\set{}$ the derived category $\D(X)$ of $\cO$-modules over a  (commutative) ringed space
 $(X\<,\cO)$, $\otimes$ is the derived tensor product of $\cO$-complexes. Any dg $\cO$-algebra gives rise to an $\sM$-algebra; but there might be $\sM$-algebras not of this kind, as the
defining diagrams may now involve quasi-isomorphisms and homotopies, not just equalities. 

 \end{exs}

The foregoing notions can be extended to \ics. 
The key is to formulate how algebraic structures in categories arise from \emph{operads,} in a way that can be upgraded to \ics\ and $\infty$-operads. Details of the actual implementation are not effortless to absorb. (See \cite[\S4.1, etc.]{A}.)

The effect is to replace isomorphism by ``coherent homotopy."
Whatever this means (see \cite[\S\kf1.2.6]{T}), it turns out that any monoidal structure on 
an \ic\  $\sC$ induces a monoidal structure on the ordinary category~$\h \sC$,
and any $\sC$-algebra (very roughly: an object with multiplication associative 
up to coherent homotopy) is taken by the homotopy functor to an $\h \sC$-algebra \cite[p.\,332, 4.1.1.12, 4.1.1.13]{A}.

The $\sC$-algebras  are the objects of an \ic\ $\Alg(\sC)$ \cite[p.\,331, 4.1.1.6]{A}. The point is that the 
the homotopy-coherence of the associativity and unit
maps are captured by an \ic\ superstructure. 
 
\smallskip
Similar remarks apply to \emph{commutative} $\sC$-algebras, that is,  $\sC$-algebras whose multiplication is commutative up to coherent homotopy.

 \begin{ex} In the  monoidal \ic\ of spectra \cite[\S\S\kf1.4.3, 6.3.2]{A},  algebras are called  $A_\infty$-rings, or $A_\infty$-ring spectra; and commutative algebras are called  
$\mathbb E_\infty$-rings, or $\mathbb E_\infty$-ring spectra. The \emph{discrete} $A_\infty$-(resp.~$\mathbb E_\infty$-) rings---those algebras $S$ whose homotopy groups $\pi_iS$ vanish for $i\ne 0$---span an 
\ic\ that is equivalent to (the nerve of) the category of associative (resp.~commutative) rings
\cite[p.\,806, 8.1.0.3]{A}. 

In general, for any  commutative ring~$R$, there is a close relation between dg $R$-algebras and $A_\infty$-$R$-algebras, see \cite[p.\,824, 8.1.4.6]{A}, \cite[Thm.\,1.1]{Sh2}; 
and  when $R$ contains the rational field $\mathbb Q$, between 
graded-commutative dg $R$-algebras and $\mathbb E_\infty$-$R$-algebras 
\cite[p.\,825, 8.1.4.11]{A}; in such situations, every $A_\infty$-(resp.~$\mathbb E_\infty$-)$R$-algebra is equivalent to a dg $R$-algebra.\va2

See also \cite[\S4.1.4]{A} for more examples of \ic-algebras that have concrete representatives.

 \end{ex}

\section{Bimodules, tensor product}

\begin{cosa}
For algebra objects $A$ and $B$  in  a  monoidal \ic\ $\sC$,  there is a notion of 
\emph{$A\>$-$B$-bimodule}---an object in $\sC$ on which, via $\otimes$ product in~$\sC$, $A$~acts on the left, $B$ on the right and the actions commute up to coherent homotopy. (No additive structure is required.) 
The  bimodules in $\sC$ are the objects of an \ic\ $_A\textup{BMod}_B(\sC)$,
to be denoted here, once the \ic\ $\sC$ is fixed, as $\Bmod AB$.
(See \cite[\S4.3]{A}.)
\end{cosa}

\begin{cosa}\label{infinity tensor}

Let $A$, $B$, $C$ be algebras in a monoidal \ic\ $\sC$  that  admits small colimits, and in which product functor $\sC\times\sC\to \sC$ preserves small colimits separately in each variable. There is a tensor-product functor
\begin{equation}\label{inftens}
{(\Bmod AB)\times (\Bmod BC)\xto{\:\otimes\:}\Bmod AC,}
\end{equation}
defined to be the \emph{geometric realization}---a kind of colimit, see  \cite[p.\,542, 6.1.2.12]{T}---of a two-sided bar-construction.
(See \cite[p.\,409, 4.3.5.11]{A}; and
for some more motivation, \cite[pp.145--146]{Lu0}.)\va2

Tensor product is associative up to canonical homotopy \cite[p.\,416, 4.3.6.14]{A}. It is unital on the left in the sense that, roughly, the endofunctor of $\Bmod BC$ given by tensoring on the left with the $B$-$B$-bimodule $B$ is canonically homotopic to the identity; and similarly on the
right \cite[p.\,417, 4.3.6.16]{A}.
Also, it preserves colimits separately in each variable \cite[p.\,411, 4.3.5.15]{A}.\va2

\begin{ex}[Musings] \label{derived tensor} It is natural to ask about direct connections between \eqref{inftens} and the usual tensor product of bimodules over rings. If there is an explicit answer in the literature I haven't found it, except when $A=B=C$ is an ordinary commutative ring
regarded as a discrete $\mathbb E_\infty$-ring, a case addressed 
by \cite[p.\,817, 8.1.2.13]{A} (whose proof might possibly be adaptable to a more general situation). 

What follows are some related remarks, in the language of model categories, which in the present context can presumably be translated into the language of \ics. \va1(Cf.~e.g., 
\cite[p.\,90, 1.3.4.21, and p.\,824, 8.1.4.6]{A}.)

Let $\sM$ be the (ordinary) symmetric monoidal category of abelian groups, and let 
$A$, $B$, $C$ be $\sM$-algebras, i.e., ordinary rings (set $\cO\set\ZZ$ in~\ref{monoids}(b)). The tensor product over $B$ of an $A\otimes B^\op$-complex 
(i.e., a left $A$- right $B$-complex, or $A$-$B$-bicomplex) and a 
$B\otimes C^\op$-complex is an $A\otimes C^\op$-complex.

\noindent \emph{Can this bifunctor be extended to a derived functor}
\[ 
\D(A\otimes B^\op)\times \D(B\otimes C^\op) \to \D(A\otimes C^\op)?
\]

To deal with the question it seems necessary to move out into the dg~world. Enlarge $\sM$ to the category of complexes of abelian groups, made into a symmetric monoidal model category
by the usual tensor product and the ``projective" model structure (weak equivalences
being quasi-isomorphisms, and fibrations being surjections),  see \cite[p.\,816, 8.1.2.11]{A}.
It results from \cite[Thm.\,4.1(1)]{SS} (for whose hypotheses see \cite[p.\,356, \S2.2 and p.\,359, Proposition 2.9]{Sh2}) that: \va1

1) For any $\sM$-algebra (i.e., dg ring) $S$,  the category~$\sM_S\subset\sM$ of left dg $S$-modules has a model structure for which maps are weak equivalences (resp.~fibrations) if and only if they are so in $\sM$.\va1

2) For any commutative $\sM$-algebra (i.e., graded-commutative dg ring)~$R$, the category of $R$-algebras in $\sM$ has a model structure for which maps are weak equivalences (resp.~fibrations) if and only if they are so in $\sM$.\va1

In either case 1) or 2), the  \emph{cofibrant}\va{.7} objects are those~$I$ such~that for any diagram $I\xto{\lift1.1,\>f,}Y\xot{\lift.5,\>\>s\>,}X$ with $s$ a\va{.3}  surjective quasi-isomorphism, there exists (in the category in play) $g\colon I\to X $ such that $sg=f$. Any object $Z$ in these model categories\va{.3} is the target of a quasi-isomorphism 
$\tilde Z\to Z$ with cofibrant~$\tilde Z$; such a quasi-isomorphism (or its source) is called a \emph{cofibrant replacement} of~$Z$. \va1

Note that the the derived category $\D(S)$---obtained by adjoining 
to~$\sM_S$ formal inverses of its quasi-isomorphisms---is the homotopy category
of the model category $\sM_S$.\va1

Now fix a graded-commutative dg ring~$R$. The \emph{derived tensor product} $S\Otimes{R} T$ of two dg $R$-algebras $S$, $T$ is the tensor product $\tilde S\otimes_R\tilde T$.%
\footnote{This is an instance of the
passage from a monoidal structure on a model category~ 
$\sM$ to~one on the homotopy category of $\sM$ \cite[\S4.3]{Ho}---a precursor of the
passage from a monoidal structure on an \ic\  
to~one on its homotopy category.}
This construction depends, up to quasi-isomorphism, on the choice of the cofibrant replacements.
However, two such derived tensor products have canonically isomorphic derived categories
\cite[Theorem 4.3]{SS}. Any such derived category will be denoted $\D(S\Otimes{R}T)$.

If either $S$ or $T$  is \emph{flat} over $R$ then the natural map $S\Otimes{R}T\to S\otimes_RT$ is a quasi-isomorphism; in this case one need not distinguish between the derived and the ordinary  tensor product.

More generally, let $S$ and $T$ be dg $R$-algebras, let $M$ be a dg $S$-module 
and $N$ a dg $T$-module. Let $\tilde S\to S$ and $\tilde T\to T$ be cofibrant replacements.
Let $\tilde M\to M$ (resp.~$\tilde N\to N$) be a cofibrant replacement in the category of dg 
$\tilde S$-(resp.~$\tilde T$-)modules. Then $\tilde M\otimes_R\tilde N$ is a dg module over 
$\tilde S\otimes_R \tilde T$. Using ``functorial factorizations"  \cite[Defn.\,1.1.3]{Ho}, one finds that this association of 
$\tilde M\otimes_R\tilde N$ to  $(M,N)$ gives rise to a functor 
\[
\D(S)\times\D(T)\to \D(S\Otimes{R}T),
\]
and that different choices of cofibrant replacements lead canonically to isomorphic functors.

If $A$, $B$ and $C$ are dg $R$-algebras, with $B$ commutative, then 
setting $S\set A\otimes_RB$ and
$T\set B\otimes_RC^\op$, 
one gets, as above, a functor
\[
\D(A\otimes_R B)\times\D(B\otimes_R C^\op)\to \D\big((A\otimes_{R}B)\Otimes{B}(B\otimes_RC^\op)\big). 
\]
Then, via restriction of scalars through the natural map 
\[\D(A\Otimes{R} C^\op)\to\D\big((A\otimes_{R}B^\op)\Otimes{B}(B\otimes_RC^\op)\big)
\]
one gets a version of the desired functor, of the form
 \[
\D(A\otimes_R B)\times\D(B\otimes_R C^\op)\to \D(A\Otimes{R}C^\op). 
\]

What does this functor have to do with the tensor product of \S\ref{infinity tensor}? 

Here is an approach that should lead to an answer;  but details 
need to be worked out.

Restrict $R$ to be an ordinary commutative ring, and again, $B$ to be graded-commutative. 
By \cite[2.15]{Sh2} there is a zig-zag $\mathbb H$ of three
``weak monoidal Quillen equivalences" between the model category of dg $R$-modules (i.e., $R$-complexes) and the model category of symmetric module spectra over the Eilenberg-Mac\,Lane symmetric spectrum~$HR$ (see e.g., \cite[4.16]{Gr}), that induces\-
a monoidal equivalence between the respective homotopy categories. 
(The monoidal structures on the model categories are given by the tensor and smash products, respectively---see e.g., \cite[Chapter 1, Thm.\,5.10]{Sc}. 
For~$A$-$B$-bimodules~$M$
and $B$-$C$-bimodules~$N$, the tensor product \mbox{$M\!\otimes_B\! N$} coequalizes the natural maps 
\mbox{$M\otimes_R B\otimes_R N\:\lift 1.2,\overrightarrow{\lto}, \:M\otimes_R N$,} and likewise for the smash product of $H M$ and~$H N$ over~$HB$; so it should follow that when $M$ and $N$ are cofibrant,
 these products also correspond, up to homotopy, under $\mathbb H$. This would reduce the problem, modulo homotopy, to a comparison of the smash product and the relative tensor product in the associated \ic\ of the latter category. But it results from \cite[4.9.1]{Sh1} that these bifunctors become naturally isomorphic in the 
homotopy category of symmetric spectra, i.e., the classical stable homotopy category.\va1

For a parallel approach, based on ``sphere-spectrum-modules" rather than symmetric spectra,
see \cite[\S IV.2 and Prop.\,IX.2.3]{EKMM}.\va1

Roughly speaking, then, any homotopical---i.e., equivalence-invariant---property of relative tensor products in the \ic\  of spectra (whose homotopy category is the stable homotopy category) should entail a  property of derived tensor products of dg modules or bimodules over appropriately commutative (or not) dg $R$-algebras.\looseness=1
\end{ex}
\end{cosa}


\section{The $\infty$-functor $\sHom$}

One shows, utilizing \cite[p.\,391, 4.3.3.10]{A}, that if $\sC$ is presentable then  so~is $\Bmod AB$.
Then one can apply the Adjoint Functor Theorem~\ref{Brown} to prove:

\begin{prop}\label{sHom}
Let\/ $A,B,C$ be algebras in a fixed presentable monoidal \ic. There exists a functor
\[
\sHom_{\>C}\colon (\Bmod B{\>C})^\op\times \Bmod AC\to \Bmod AB
\]
such that for every fixed\/ $y\in\Bmod BC,$ the functor
\[
z\mapsto\sHom_{\>C}(y,z)\colon \Bmod AC\to\Bmod AB
\] 
is right-adjoint to the functor\/ $x\mapsto x\otimes_B y\colon \Bmod AB\to\Bmod AC$.
\end{prop}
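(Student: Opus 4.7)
The plan is to produce $\sHom_C$ by assembling a family of right adjoints parametrised by $y\in\Bmod BC$, using Theorem~\ref{Brown} for the pointwise existence and Proposition~\ref{conjugacy} to upgrade pointwise adjoints into a two-variable functor.

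First I would verify the hypotheses of the Adjoint Functor Theorem for the functor that tensors on the right with a single bimodule. Since the ambient monoidal $\infty$-category $\sC$ is presentable, the $\infty$-categories $\Bmod AB$, $\Bmod BC$ and $\Bmod AC$ are presentable (this is the appeal to \cite[p.\,391, 4.3.3.10]{A} noted just before the proposition). For each fixed $y\in\Bmod BC$ the functor $L_y:=(-)\otimes_B y\colon\Bmod AB\to\Bmod AC$ preserves small colimits, because relative tensor product preserves colimits separately in each variable (cited earlier from \cite[p.\,411, 4.3.5.15]{A}). Theorem~\ref{Brown} then gives a right adjoint $R_y\colon\Bmod AC\to\Bmod AB$ to $L_y$, and this already supplies the desired object-level adjunction.

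Next I would promote the association $y\mapsto L_y$ to an actual functor of $\infty$-categories. The tensor product \eqref{inftens} is a bifunctor, so the exponential law for $\infty$-categorical functor categories yields a functor
\[
\Lambda\colon \Bmod BC\lto\Fun{\Bmod AB}{\Bmod AC},\qquad y\longmapsto L_y.
\]
By the previous step every $L_y$ is a left adjoint, so $\Lambda$ factors through the full subcategory $\textup{Fun}^{\textup L}(\Bmod AB,\Bmod AC)$. Applying the equivalence $\varphi$ of Proposition~\ref{conjugacy} (with the roles of $\sC$ and $\sD$ interchanged) gives a functor
\[
\Bmod BC\xto{\ \Lambda\ }\textup{Fun}^{\textup L}(\Bmod AB,\Bmod AC)
\xto{\ \varphi^{-1}\ }\textup{Fun}^{\textup R}(\Bmod AC,\Bmod AB)^{\op},
\]
and taking opposites and using the exponential law once more gives precisely a functor
\[
\sHom_C\colon (\Bmod BC)^{\op}\times\Bmod AC\lto\Bmod AB,
\]
with $\sHom_C(y,-)\simeq R_y$ as right adjoint to $L_y$ for every $y$.

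The main obstacle is the second step, namely the passage from pointwise right adjoints to a two-variable functor. The pointwise construction in Theorem~\ref{Brown} is only unique up to contractible choice, so one cannot simply glue the individual $R_y$ by hand; the argument must proceed by producing the left adjoints as a single functor and then inverting the conjugacy equivalence~\eqref{left-right}, which is only well-defined up to homotopy. This forces the whole construction to live in the $\infty$-category $\Fun{\Bmod BC}{\Fun{\Bmod AC}{\Bmod AB}}$ rather than in any strict category, and it is what makes Proposition~\ref{conjugacy} essential. Once the exponential law and the equivalence $\varphi$ are invoked in the correct order, the required adjunction isomorphism in $\sH$ is inherited directly from the pointwise adjunction provided by Theorem~\ref{Brown}.
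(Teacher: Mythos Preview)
Your proposal is correct and follows the paper's approach. The paper's own argument for this proposition is just the sentence preceding it: presentability of the bimodule \ics\ via \cite[p.\,391, 4.3.3.10]{A}, then Theorem~\ref{Brown} applied to $(-)\otimes_B y$ (colimit-preservation coming from \cite[p.\,411, 4.3.5.15]{A}). That yields only the pointwise right adjoints $R_y$; the paper says nothing further here about assembling them into a two-variable functor. You supply that missing step explicitly, via the exponential law and the conjugacy equivalence of Proposition~\ref{conjugacy}---which is precisely the device the paper itself deploys later in the proof of Theorem~\ref{adjassoc}. So your argument is the paper's argument, with the bifunctoriality spelled out rather than left tacit. (One cosmetic point: the map you label $\varphi^{-1}$ is really $(\varphi^{\op})^{-1}$, since $\varphi$ in \eqref{left-right} lands in an opposite category; your subsequent ``taking opposites'' fixes this, so the bookkeeping comes out right.)
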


As adjoint functors between \ics\ induce adjoint functors between the respective homotopy categories, and by unitality of tensor product,  when $x=A=B$ one gets:

\begin{cor}[``global sections" of $\sHom\;$= mapping space]\label{Hom+Map} There exists
an\/ $\sH$-isomorphism of\/  functors $($going from 
$\h\, (\Bmod AC)^\op\times \h\,( \Bmod AC)$ to $\sH\>$$)$
\[
\Map{\<\Bmod AA}(A, \sHom_{\>C}(y,z))\iso \Map{\<\Bmod AC}(A\otimes_A y, z)\cong\Map{\<\Bmod AC}(y, z).
\]
\end{cor}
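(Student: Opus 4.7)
The plan is to derive the corollary directly from Proposition \ref{sHom} by specializing $B = A$ and then specializing the source of the first variable to $A$ itself, and finally invoking the left unitality of the tensor product.

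First I would take $B = A$ in Proposition \ref{sHom}. That proposition then supplies, for each fixed $y \in \Bmod A C$, an adjunction of functors of \ics
\[
(-) \otimes_A y \colon \Bmod A A \;\rightleftarrows\; \Bmod A C \;\colon\; \sHom_C(y,-).
\]
By the very definition of adjoint functors between \ics\ recalled in Section 8 (following \cite[p.\,340, 5.2.2.8]{T}), the unit of this adjunction induces for every $x \in \Bmod A A$ and $z \in \Bmod A C$ an equivalence in $\sH$
\[
\Map{\Bmod A C}(x \otimes_A y,\, z) \;\iso\; \Map{\Bmod A A}(x,\, \sHom_C(y,z)),
\]
natural in all three variables, and in particular functorial on the homotopy categories.

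Next I would specialize to $x = A$, regarded as an $A$-$A$-bimodule via its algebra structure. This gives
\[
\Map{\Bmod A C}(A \otimes_A y,\, z) \;\iso\; \Map{\Bmod A A}(A,\, \sHom_C(y,z)).
\]
The final ingredient is the left unitality of the relative tensor product stated in Section 8: by \cite[p.\,417, 4.3.6.16]{A}, tensoring on the left with the bimodule $A$ is canonically homotopic to the identity endofunctor of $\Bmod A C$, so there is a canonical equivalence $A \otimes_A y \simeq y$ in $\Bmod A C$, natural in $y$. Composing this equivalence with the adjunction equivalence and passing to $\h$-enriched homotopy categories yields the asserted chain of natural $\sH$-isomorphisms.

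I do not anticipate a serious obstacle: the content is essentially a bookkeeping exercise combining adjunction on mapping spaces with unitality. The only mildly delicate point is to make sure that the equivalence $A \otimes_A y \simeq y$ provided by \cite[p.\,417, 4.3.6.16]{A} is natural in $y$ at the level of \ics\ (not merely object-by-object), so that the resulting identification $\Map{\Bmod A C}(A \otimes_A y, z) \cong \Map{\Bmod A C}(y,z)$ is a bona fide isomorphism of bifunctors on $\h (\Bmod A C)^{\op} \times \h (\Bmod A C)$; this is exactly what is asserted in \lc, so the argument reduces to citing it.
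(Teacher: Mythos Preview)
Your proposal is correct and follows essentially the same approach as the paper: the paper derives the corollary from Proposition~\ref{sHom} by noting that adjoint functors between \ics\ induce adjoint functors on the $\sH$-enriched homotopy categories, then specializing to $x=A=B$ and invoking the unitality of tensor product. Your write-up simply unpacks these steps in more detail.
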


\pagebreak[3]

\section{Adjoint Associativity in \ics}

We are finally in a position to make sense of adjoint associativity for \ics. The result and proof are similar in spirit to, if not implied by, those  in \cite[p.\,358, 4.2.1.31 and 4.2.1.33(2)]{A}
about ``morphism objects."

\begin{thm}\label{adjassoc}
There is  in\/ \textup{Fun}$((\Bmod AB)^\op\times(\Bmod BC)^\op\times \Bmod DC,\, \Bmod DA)$ a
functorial equivalence $($canonically defined, up to homotopy$)$
\[
\alpha(x,y,z)\colon\sHom_{\>C}(x\otimes_B\< y, z)\to \sHom_B(x,\sHom_{\>C}(y,z))
\]
such that for any objects\/ $x\in\Bmod AB$ and $y\in\Bmod BC\>,$ 
the map\/ $\alpha(x,y,-)$ in $\textup{Fun}^{\textup{R}}(\Bmod DC ,\Bmod DA )$ is taken by the  the equivalence\/ \textup{\ref{left-right}} to the associativity equivalence,  in\/ 
$\textup{Fun}^{\textup{L}}(\Bmod DA ,\Bmod DC )^\op,$\va{-1}
\[
-\otimes_A (x\otimes_B\< y)\leftarrow (-\otimes_A x)\otimes_B y.
\]
\end{thm}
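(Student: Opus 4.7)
The plan is to obtain $\alpha$ as the image, under (the inverse of) the conjugacy equivalence $\varphi$ of Proposition~\ref{conjugacy}, of the associativity equivalence for the relative tensor product, with full functoriality in $(x,y,z)$ secured by performing the construction parametrically over $(x,y)\in\Bmod AB\times\Bmod BC$ rather than merely pointwise. For fixed $(x,y)$, the two functors $-\otimes_A(x\otimes_B y)$ and $(-\otimes_A x)\otimes_B y$ from $\Bmod DA$ to $\Bmod DC$ are both iterated tensor-product-with-a-fixed-bimodule functors, each of which admits a right adjoint by Proposition~\ref{sHom}; consequently both lie in $\Funl{\Bmod DA}{\Bmod DC}$. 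Their right adjoints, in $\Funr{\Bmod DC}{\Bmod DA}$, are respectively $\sHom_C(x\otimes_B y,-)$ and $\sHom_B(x,\sHom_C(y,-))$, the latter because the right adjoint of a composite of left adjoints is the composite of right adjoints taken in reverse order.

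First I would assemble the associativity equivalence into a natural transformation in the pair $(x,y)$. The coherent associativity of the relative tensor product, established in \cite[p.\,416, 4.3.6.14]{A}, yields an equivalence between the two iterated tensor products as objects of $\Fun{\Bmod DA\times\Bmod AB\times\Bmod BC}{\Bmod DC}$; currying in the first factor converts this into an equivalence in $\Fun{\Bmod AB\times\Bmod BC}{\Fun{\Bmod DA}{\Bmod DC}}$ that, by the remark above, factors through the full subcategory $\Funl{\Bmod DA}{\Bmod DC}$.

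Now I would invoke $\varphi$. Since $\varphi\colon\Funr{\Bmod DC}{\Bmod DA}\to\Funl{\Bmod DA}{\Bmod DC}^\op$ is an equivalence, so is the functor it induces between functor $\infty$-categories out of $\Bmod AB\times\Bmod BC$; the associativity equivalence, viewed in $\Funl{\Bmod DA}{\Bmod DC}^\op$, therefore transports across $\varphi^{-1}$ to an equivalence in $\Fun{\Bmod AB\times\Bmod BC}{\Funr{\Bmod DC}{\Bmod DA}}$. By the identification of right adjoints recorded above, this is precisely the datum of a natural equivalence $\sHom_C(x\otimes_B y,-)\to\sHom_B(x,\sHom_C(y,-))$ depending functorially on $(x,y)$; uncurrying in the remaining variable $z$ then produces $\alpha$ as an equivalence in $\Fun{(\Bmod AB)^\op\times(\Bmod BC)^\op\times\Bmod DC}{\Bmod DA}$. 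Since $\alpha(x,y,-)$ is, by construction, the preimage under $\varphi$ of the associativity equivalence, the stated compatibility holds tautologically.

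The main obstacle will be pure coherence: verifying that the associativity equivalence of \cite[4.3.6.14]{A} really does assemble into a morphism in the $\infty$-categorical $\Fun$ (and lands functorially in $\Funl$, not just pointwise), and that $\varphi$ is sufficiently natural in parameters to be applied in families over $\Bmod AB\times\Bmod BC$ as above. This is precisely the kind of issue for which the $\infty$-operadic constructions of \cite[\S4.3]{A} are designed, so the substance of the proof amounts to invoking those constructions and checking that the pieces fit together; the uniqueness of $\alpha$ up to homotopy then follows from the essential uniqueness of right adjoints \cite[p.\,342, 5.2.2.12]{T} combined with that of $\varphi$ itself.
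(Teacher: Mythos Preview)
Your proposal is correct and follows essentially the same route as the paper's own proof: start from the associativity equivalence of the iterated tensor product as a map in $\Fun{(\Bmod DA)\times(\Bmod AB)\times(\Bmod BC)}{\Bmod DC}$, curry to land in $\Fun{(\Bmod AB)\times(\Bmod BC)}{\Fun{\Bmod DA}{\Bmod DC}}$, observe that the target factors through $\textup{Fun}^{\textup L}$ (checked on objects via Proposition~\ref{sHom}), and then postcompose with the inverse of the conjugacy equivalence $\varphi$ of Proposition~\ref{conjugacy} to obtain $\alpha$. The paper carries out exactly these steps, with the same appeal to currying and the same identification of the right adjoints; your closing remarks about coherence correspond to what the paper dismisses as ``straightforward.''
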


Using Corollary~\ref{Hom+Map}, one deduces:
\begin{cor}\label{global assoc}
In  the homotopy category of spaces there is a trifunctorial isomorphism\va{-1}
\[
\Map {\<\Bmod {\<A}C}(x\otimes_B\< y,z)\iso \Map{\<\Bmod AB}(x,\sHom_{\>C}(y,z))\\[3pt]
\]
\rightline{$(x\in\Bmod AB,\,y\in\Bmod BC,\,z\in\Bmod AC).$}
\end{cor}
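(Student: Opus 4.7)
The plan is to derive the corollary as a purely formal consequence of Theorem~\ref{adjassoc} and Corollary~\ref{Hom+Map}.

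First, specialize Theorem~\ref{adjassoc} to $D=A$. This yields a trifunctorial equivalence
\[
\alpha(x,y,z)\colon \sHom_{\>C}(x\otimes_B\< y,z)\eqv \sHom_B\!\bigl(x,\sHom_{\>C}(y,z)\bigr)
\]
of objects in the \ic\ $\Bmod AA$. Since every equivalence in an \ic\ descends to an isomorphism in its $\sH$-enriched homotopy category, and since $\Map{\Bmod AA}(A,-)$ is an $\sH$-valued functor on that homotopy category, applying it to $\alpha$ produces an $\sH$-isomorphism
\[
\Map{\Bmod AA}\!\bigl(A,\sHom_{\>C}(x\otimes_B\< y,z)\bigr)\iso \Map{\Bmod AA}\!\bigl(A,\sHom_B(x,\sHom_{\>C}(y,z))\bigr).
\]

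Next, I would use Corollary~\ref{Hom+Map} twice to rewrite each side as an honest mapping space. Taking the outer algebra there to be the present $C$, with the substitutions $y\mapsto x\otimes_B y\in\Bmod AC$ and $z\mapsto z\in\Bmod AC$, identifies the left-hand side with $\Map{\Bmod AC}(x\otimes_B\< y,z)$. Taking instead the outer algebra there to be the present $B$, with $y\mapsto x\in\Bmod AB$ and $z\mapsto \sHom_{\>C}(y,z)\in\Bmod AB$, identifies the right-hand side with $\Map{\Bmod AB}\!\bigl(x,\sHom_{\>C}(y,z)\bigr)$. Composing the three $\sH$-isomorphisms delivers the asserted equivalence; trifunctoriality in $(x,y,z)$ is inherited from the trifunctoriality of $\alpha$ in Theorem~\ref{adjassoc} and from the naturality built into Corollary~\ref{Hom+Map}.

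The ``main obstacle'' has in fact been handled upstream. The substantive homotopy-coherent content---that $\sHom_{\>C}(x\otimes_B y,z)$ and $\sHom_B(x,\sHom_{\>C}(y,z))$ are canonically equivalent as $A$-$A$-bimodules, compatibly under \eqref{left-right} with the associativity of $\otimes$ on the left-adjoint side---sits entirely in Theorem~\ref{adjassoc}, and the present corollary is merely its shadow under the ``sections'' functor $\Map{\Bmod AA}(A,-)$.
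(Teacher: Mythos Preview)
Your proposal is correct and is precisely the argument the paper has in mind: the paper records only ``Using Corollary~\ref{Hom+Map}, one deduces'' before stating the corollary, and your write-up unpacks exactly that---specialize Theorem~\ref{adjassoc} to $D=A$, apply $\Map{\Bmod AA}(A,-)$, and invoke Corollary~\ref{Hom+Map} (once with the outer algebra $C$, once with $B$) to identify both sides with the desired mapping spaces.
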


\begin{ex}[More musings]\label{derived hom} 
What conclusions about ordinary algebra can we draw? 

Let us confine attention to spectra, and try to understand the homotopy invariants 
of the mapping spaces in the preceding Corollary, in particular the maps in the corresponding unenriched homotopy categories (see last paragraph in \S\ref{map space}).

As in Example~\ref{derived tensor}, the following remarks outline a possible approach, 
whose details I have not completely verified.

Let $R$ be an ordinary commutative ring. Let $S$ and $T$ be dg $R$-algebras, and $U$ a derived tensor product $U\set S\Otimes{R} T^\op$ (see Example~\ref{derived tensor}). 
For any dg~$R$-module $V\<$, let $\widehat V$ be the canonical image of  $\mathbb HV$ ($\mathbb H$ as in~\ref{derived tensor})
 in the associated \ic~$\sD$ of the model category $\mathbf A$ of  $HR$-modules. 
 The \ic~$\sD$ is monoidal via a suitable extension, denoted $\wedge$, of the smash product, see \cite[p.\,619, (S1)]{A}. Recall from the second-last paragraph in~\ref{model cat} that the homotopy
 category $\h \sD$ is equivalent to the homotopy category of~$\mathbf A$.

 As indicated toward the end of Example~\ref{derived tensor}, \emph{there should~be,} in~$\sD$,
 an~equivalence\looseness=-1
\[
\widehat U\overset{\lift.5,\approx,}\lto\widehat S\wedge\widehat{T^\op}. 
\]
whence, by \cite[p.\,650, 6.3.6.12]{A}, an equivalence
\[
\Bmod {\widehat S}{\widehat T}\simeq\lmod {\widehat U}\>,
\]
whence, for any dg $S$-$T$ bimodules $a$ and $b$,  and $i\in\ZZ$, isomorphisms in the  homotopy category $\sH$ of spaces
\begin{equation}\label{pi1}
\Map {\<\Bmod{\widehat S}{\widehat T}}(\widehat a,\widehat{b[i]})
\iso \Map{\lmod{\widehat U}}(\widehat a,\widehat{b[i]}).
\end{equation}
(For the hypotheses of \emph{loc.\,cit.}, note that the \ic\ Sp of spectra, being presentable, has small 
colimits---see remarks preceding Theorem~\ref{Brown}; and these colimits are preserved by smash product \cite[p.\,623, 6.3.2.19]{A}.)\va1

By \cite[p.\,393, 4.3.3.17\kf\kf]{A} and again, \cite[p.\,650, 6.3.6.12]{A}, the stable \ic\ 
$\lmod{\widehat U}$ is equivalent to the associated \ic\ of the model category of 
$ H U$-module spectra, and hence to the associated \ic~$\sU$ of the equivalent
model category of dg $U$-modules. There results an $\sH$-isomorphism
\begin{equation}\label{pi2}
\Map{\lmod{\widehat U}}(\widehat a,\widehat{b[i]})\iso \Map{\>\sU\<}(a,b[i]).
\end{equation} 

Since the homotopy category $\h\lmod{\widehat U}$ is equivalent to $\h\sU\set \D(U)$, 
\eqref{pi1} and~\eqref{pi2} give isomorphisms, with $\ext^i_{\Bmod{\widehat S}{\widehat T}}$ 
as in~\cite[p.\,24, 1.1.2.17]{A}:  
\[
\ext^i_{\Bmod{\widehat S}{\widehat T}}(\widehat a,\widehat{b})\set 
\pi_0\Map {\<\Bmod{\widehat S}{\widehat T}}(\widehat a,\widehat{b[i]})
\iso \Hom_{\D(U)}(a,{b[i]})=\ext^i_U(a, b).
\]
(In particular, when $S=T=a$, one gets the derived Hochschild cohomology of~$S/R$, with coefficients in $b$.)\va1

Thus, Corollary~\ref{global assoc} implies a derived version, involving Exts, of adjoint associativity for
dg bimodules. 
\end{ex}

\pagebreak[3]

\begin{proof}[Proof of Theorem \textup{\ref{adjassoc} (Sketch)}]

The associativity of tensor product 
 gives a canonical equivalence, in $\Fun{(\Bmod DA)\times(\Bmod AB)\times(\Bmod BC)}{(\Bmod DC)}$,  between the composed  functors
\begin{align*}
&(\Bmod DA)\times(\Bmod AB)\times(\Bmod BC)
\xto{\!\otimes\times\id\,}
(\Bmod DB)\times(\Bmod BC)
\xto{\,\otimes\,}(\Bmod DC),\\
&(\Bmod DA)\times(\Bmod AB)\times(\Bmod BC)
\xto{\!\id\<\times\>\otimes\,}
(\Bmod DA)\times(\Bmod AC)
\xto{\,\otimes\,}(\Bmod DC).
\end{align*}
The standard isomorphism $\Fun{X\times Y}Z\iso \Fun{X}{\Fun YZ}$ 
(see \cite[p.\,20, Prop.\,5.1]{GJ}) turns this into an equivalence $\xi$ between the corresponding functors from 
$(\Bmod AB)\times (\Bmod BC)$ to $\Fun{\Bmod DA}{\Bmod DC}$. These functors factor through 
the full subcategory $\textup{Fun}^{\textup L}(\Bmod DA,\Bmod DC)$: this need only be checked at the level of objects $(x,y)\in (\Bmod AB)\times(\Bmod BC)$, whose image functors are, by Proposition~\ref{sHom}, left-adjoint, respectively, to 
 $\sHom_B(x,\sHom_C(y,-))$ and to~$\sHom_C(x\otimes y,-)$. Composition with $(\varphi^{-1})^\op$ ($\varphi$ as in \eqref{left-right}) takes $\xi$
into an equivalence  in 
\begin{multline*}
\Fun{(\Bmod AB)\times (\Bmod BC)}{{\textup{Fun}^{\textup R}(\Bmod DC,\Bmod DA)}^\op\>}\\
=\Fun{(\Bmod AB)^\op\times (\Bmod BC)^\op}{{\textup{Fun}^{\textup R}(\Bmod DC,\Bmod DA)}},
\end{multline*}
to which $\alpha$ corresponds.
(More explicitly, note that for any \ics\ $X$, $Y$ and $Z$, there is a \emph{composition functor}
\begin{equation}\label{comp}
\Fun YZ\times\Fun XY\to\Fun XZ
\end{equation}
corresponding to the natural composed functor
\[
\Fun YZ\times \Fun XY\times X \to \Fun YZ\times Y\to Z\>;
\]
and then set $X\set (\Bmod AB)^\op\times (\Bmod BC)^\op$, 
$Y\set \textup{Fun}^{\textup L}(\Bmod DA,\Bmod DC)^\op$
and \mbox{$Z\set \textup{Fun}^{\textup R}(\Bmod DC,\Bmod DA)$}\dots)

The rest follows in a straightforward manner from Proposition~\ref{conjugacy}.
\end{proof}

\bigskip

\textbf{Acknowledgement} I am grateful to Jacob Lurie
and Brooke Shipley for enlightenment on a number of questions that arose during the preparation of this paper.

\end{document}